\newtheorem{lem}{Lemma}[section]
\newtheorem{cor}[lem]{Corollary}
\newtheorem{thm}[lem]{Theorem}
\newtheorem{Defn}[lem]{Definition}
\newtheorem{Ex}[lem]{Example}
\newtheorem{Question}[lem]{Question}
\newtheorem{Property}[lem]{Property}
\newtheorem{Properties}[lem]{Properties}
\newtheorem{Discussion}[lem]{Remark}
\newtheorem{Construction}[lem]{Construction}
\newtheorem{Notation}[lem]{Notation}
\newtheorem{Fact}[lem]{Fact}
\newtheorem{Assumption}[lem]{Assumption}
\newtheorem{Notationdefinition}[lem]{Definition/Notation}
\newtheorem{Remarkdefinition}[lem]{Remark/Definition}
\newtheorem{Subprops}{}[lem]
\newtheorem{Para}[lem]{}
\newtheorem{Step}{Step}
\newenvironment{defn}{\begin{Defn}\rm}{\end{Defn}}
\newenvironment{ex}{\begin{Ex}\rm}{\end{Ex}}
\newenvironment{question}{\begin{Question}\rm}{\end{Question}}
\newenvironment{notation}{\begin{Notation}\rm}{\end{Notation}}
\newenvironment{assumption}{\begin{Assumption}\rm}{\end{Assumption}}
\newenvironment{disc}{\begin{Discussion}\rm}{\end{Discussion}}
\newcommand{\D}{\mathcal{D}}
\newcommand{\cat}[1]{\mathcal{#1}}
\newcommand{\cata}{\cat{A}}
\newcommand{\catb}{\cat{B}}
\newcommand{\id}{\operatorname{id}}
\newcommand{\rank}{\operatorname{rank}}
\newcommand{\card}[1]{|#1|}
\newcommand{\ext}{\operatorname{Ext}}   
\newcommand{\rhom}{\mathbf{R}\!\operatorname{Hom}}      
\newcommand{\lotimes}{\otimes^{\mathbf{L}}}
\newcommand{\HH}{\operatorname{H}}
\newcommand{\Hom}{\operatorname{Hom}}
\newcommand{\catss}{\mathfrak{S}}
\newcommand{\cats}{\mathfrak{S}}
\newcommand{\im}{\operatorname{Im}}
\newcommand{\Ker}{\operatorname{Ker}}
\newcommand{\ideal}[1]{\mathfrak{#1}}
\newcommand{\m}{\ideal{m}}
\newcommand{\n}{\ideal{n}}
\newcommand{\zz}{\mathbb{Z}}
\newcommand{\xra}{\xrightarrow}
\newcommand{\vf}{\varphi}
\newcommand{\tri}{\trianglelefteq}
\newcommand{\is}[1]{[#1]}
\newcommand{\eq}[1]{[#1]}
\renewcommand{\geq}{\geqslant}
\renewcommand{\leq}{\leqslant}
\renewcommand{\hom}{\Hom}
\newcommand{\ui}{\underline{i}}
\newcommand{\us}{\underline{s}}
\newcommand{\trineq}{\triangleleft}
\numberwithin{equation}{lem}
\begin{document}

\bibliographystyle{amsplain}

\title[The number of semidualizing complexes]
{Lower bounds for the number of semidualizing complexes over a local ring}

\author{Sean Sather-Wagstaff}
\address{Sean Sather-Wagstaff, Department of Mathematics,
NDSU Dept \# 2750,
PO Box 6050,
Fargo, ND 58108-6050 USA}
\email{Sean.Sather-Wagstaff@ndsu.edu}
\urladdr{http://math.ndsu.nodak.edu/faculty/ssatherw/}
\thanks{The author was supported in part by a grant from
the NSA}

\date{\today}


\keywords{Gorenstein dimensions, G-dimensions, semidualizing complexes}
\subjclass[2000]{13D05, 13D25}

\begin{abstract}
We investigate the set $\catss(R)$ of shift-isomorphism classes of semi-dualizing
$R$-complexes, ordered via
the reflexivity relation, where $R$ is a commutative noetherian local ring.
Specifically, we study the
question of whether $\catss(R)$
has cardinality $2^n$ for some $n$.  
We show that, if there is a 
chain of length $n$ in $\catss(R)$ and if the reflexivity ordering
on $\catss(R)$ is transitive, then $\catss(R)$
has cardinality at least $2^n$, and we 
explicitly describe some of its order-structure.
We also show that, given a local ring homomorphism $\vf\colon R\to S$ of 
finite flat dimension, 
if $R$ and $S$ admit dualizing complexes and if
$\vf$ is not Gorenstein, then
the cardinality of $\catss(S)$ is at least twice the cardinality of $\catss(R)$.
\end{abstract}

\maketitle

\section{Introduction} \label{sec01}

Throughout this work $(R,\m)$ and $(S,\n)$ are commutative noetherian local rings.  

A homologically finite $R$-complex $C$ is \emph{semidualizing} if the natural
homothety morphism $R\to\rhom_R(C,C)$ is an isomorphism
in the derived category $\D(R)$.
(See Section~\ref{sec02}  for background material.) 
Examples of semidualizing $R$-complexes include $R$ itself
and a dualizing $R$-complex when one exists.
The set of shift-isomorphism classes of semidualizing
$R$-complexes is denoted $\catss(R)$, 
and the shift-isomorphism class of a semidualizing $R$-complex $C$ is denoted $\eq C$.

Semidualizing complexes were introduced 
by Avramov and Foxby~\cite{avramov:rhafgd} and Christensen~\cite{christensen:scatac} 
in part to investigate
the homological properties of local ring homomorphisms.
Our interest in these complexes comes from their potential
as tools for answering the composition question for local ring homomorphisms
of finite G-dimension. Unfortunately, 
the utility of the semidualizing $R$-complexes is hampered by the fact that
our 
understanding of $\cats(R)$ is very limited.
For instance, we do not even 
know if the set $\catss(R)$ is finite; see~\cite{christensen:cmafsdm}
for some recent progress.  

We are interested in the following question, motivated by
results from~\cite{frankild:sdcms}, wherein $\card{\catss(R)}$ is the
cardinality of the set $\catss(R)$.

\begin{question} \label{q0101}
If $R$ is a local ring, must we have  $\card{\catss(R)}=2^n$ for some $n\in\mathbb{N}$?
\end{question}

Each semidualizing $R$-complex $C$ gives rise to a
notion of reflexivity 
for homologically finite $R$-complexes.
For instance, each homologically finite $R$-complex of finite projective
dimension is $C$-reflexive.  On the other hand, a semidualizing $R$-complex
$C$ is dualizing if and only if every
homologically finite $R$-complex is $C$-reflexive.  
We order $\catss(R)$ using reflexivity: write $\eq C\tri\eq B$ whenever
$B$ is $C$-reflexive.  This relation  is  reflexive and antisymmetric, but
we do not know if it is transitive in general.
A \emph{chain} in $\catss(R)$ is a sequence $\eq{C_0}\tri\eq{C_1}\tri\cdots\tri\eq{C_n}$,
and
such a chain has \emph{length $n$} if $\eq{C_i}\neq\eq{C_j}$ whenever $i\neq j$.

The main result of this paper, stated next, uses the lengths of chains
in $\cats(R)$ to provide a lower bound of the form $2^n$ on the cardinality
of $\catss(R)$. It is part of Theorem~\ref{thm0402} which also contains the analogous
result for the set of 
isomorphism classes of semidualizing $R$-modules.

\begin{thm} \label{thm0101}
Assume that the reflexivity ordering on $\catss(R)$ is transitive.
If $\catss(R)$ admits a chain of length $n$,
then  $\card{\catss(R)}\geq 2^n$.
\end{thm}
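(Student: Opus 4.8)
The plan is to prove, by induction on $n$, the formally stronger statement: \emph{if the reflexivity ordering on $\catss(R)$ is transitive and $\eq{C_0}\tri\eq{C_1}\tri\cdots\tri\eq{C_n}$ is a chain of length $n$, then there is a subset $\mathcal F\subseteq U(\eq{C_0})$ with $\card{\mathcal F}\geq 2^n$}, where $U(\eq C):=\{\eq B\in\catss(R):\eq C\tri\eq B\}$. Transitivity makes $U(\eq C)$ an up-set with least element $\eq C$. The engine is the relative-duality operation: for $\eq C\tri\eq B$ put $B^{\dagger_C}:=\rhom_R(B,C)$; then $B^{\dagger_C}$ is semidualizing, $\eq C\tri\eq{B^{\dagger_C}}$, and $(B^{\dagger_C})^{\dagger_C}\simeq B$, so $\eq B\mapsto\eq{B^{\dagger_C}}$ is an involution of $U(\eq C)$; moreover $\eq C\tri\eq A\tri\eq B$ forces $\eq{B^{\dagger_C}}\tri\eq{A^{\dagger_C}}$, i.e.\ this involution reverses $\tri$ on $U(\eq C)$. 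These facts, and their module-theoretic analogues, are standard (cf.\ \cite{avramov:rhafgd,christensen:scatac}).

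For $n=0$ take $\mathcal F=\{\eq{C_0}\}$. For $n\geq 1$, apply the case $n-1$ to the chain $\eq{C_1}\tri\cdots\tri\eq{C_n}$ (whose least complex is $C_1$) to obtain a subset $\mathcal G\subseteq U(\eq{C_1})$ with $\card{\mathcal G}\geq 2^{n-1}$; transitivity gives $U(\eq{C_1})\subseteq U(\eq{C_0})$, so the involution $\eq A\mapsto\eq{A^{\dagger_{C_0}}}$ of $U(\eq{C_0})$ is defined on $\mathcal G$. Put $\mathcal G':=\{\,\eq{A^{\dagger_{C_0}}}:\eq A\in\mathcal G\,\}$ and $\mathcal F:=\mathcal G\cup\mathcal G'$; both lie in $U(\eq{C_0})$. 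Since the involution is a bijection, $\card{\mathcal G'}=\card{\mathcal G}$, so it is enough to show $\mathcal G\cap\mathcal G'=\emptyset$, as then $\card{\mathcal F}=2\card{\mathcal G}\geq 2^n$. Suppose $\eq X\in\mathcal G\cap\mathcal G'$. Then $\eq{C_1}\tri\eq X$ because $\mathcal G\subseteq U(\eq{C_1})$; also $\eq X=\eq{A^{\dagger_{C_0}}}$ for some $\eq A\in\mathcal G$, so $\eq{C_1}\tri\eq A$, and order-reversal applied to $\eq{C_0}\tri\eq{C_1}\tri\eq A$ gives $\eq X\tri\eq{C_1^{\dagger_{C_0}}}$; by transitivity $\eq{C_1}\tri\eq{C_1^{\dagger_{C_0}}}$. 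Since a chain of length $n\geq 1$ has $\eq{C_0}\neq\eq{C_1}$, this is forbidden by the key lemma below, so $\mathcal G\cap\mathcal G'=\emptyset$. The theorem is the resulting bound $\card{\catss(R)}\geq\card{\mathcal F}\geq 2^n$ (the case $n=0$ being trivial).

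The key lemma, which carries the real content, is a rigidity statement: \emph{if $\eq C\tri\eq B$ and $\eq C\neq\eq B$, then $\eq B\not\tri\eq{B^{\dagger_C}}$} — equivalently, the involution of $U(\eq C)$ carries $U(\eq B)$ off itself. Over an arbitrary local ring this is not a formal consequence of the poset and involution axioms and must be extracted from the homological definitions; this is the main obstacle. The argument I would pursue: assuming $\eq C\tri\eq B$ (so $B\simeq(B^{\dagger_C})^{\dagger_C}$) and $\eq B\tri\eq{B^{\dagger_C}}$ (so $B^{\dagger_C}$ is $B$-reflexive), play the two biduality and tensor-evaluation isomorphisms off one another to identify $\rhom_R(B^{\dagger_C},B)\simeq C$; this exhibits $\eq C$ in the $\dagger_B$-orbit of $U(\eq B)$, so $\eq B\tri\eq C$, and antisymmetry yields $\eq C=\eq B$, a contradiction. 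The delicate points are homological finiteness and the compatibility of evaluation maps along the way — the sort of thing settled in the preliminaries — and the absolute prototype that a semidualizing complex of finite Gorenstein dimension is (a shift of) $R$, so that $\eq R\tri\eq B$ already forces $\eq B=\eq R$, shows such rigidity is indeed available in this setting.

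Two riders complete Theorem~\ref{thm0402}. Running the same induction with $\rhom_R$, ``$C$-reflexive complex'' and the biduality isomorphisms replaced by $\Hom_R$, ``$C$-reflexive module'' and their module-level counterparts — carrying the needed $\ext$-vanishing through the key lemma — gives the analogous lower bound $2^n$ for the set of isomorphism classes of semidualizing $R$-modules. And unwinding the construction shows $\mathcal F$ consists of iterated relative duals of $C_n$ taken against $C_{n-1},\dots,C_0$, indexed by which duals are performed; reading off the $\tri$-relations forced among these $2^n$ classes is precisely the partial description of the order-structure of $\catss(R)$ promised in the introduction.
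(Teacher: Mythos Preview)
Your overall architecture is the same as the paper's: induct along the chain, use the order-reversing involution $\eq B\mapsto\eq{\rhom_R(B,C_0)}$ on $\cats_{C_0}(R)$ to double the set coming from the shorter chain, and establish disjointness via a ``rigidity'' lemma. Your sets $U(\eq C)$ are exactly the paper's $\cats_C(R)$, and your key lemma is the paper's Lemma~\ref{lem0401} specialized to $A=C$, $B=C=B$ (in the paper's labeling): if $\eq C\trineq\eq B$ then $\rhom_R(B,C)$ is not $B$-reflexive. The paper packages the doubling as Lemma~\ref{prop0401} and then runs the identical induction in Theorem~\ref{thm0402}.

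The gap is in your sketch of the key lemma. You propose to ``play the two biduality and tensor-evaluation isomorphisms off one another to identify $\rhom_R(B^{\dagger_C},B)\simeq C$.'' This isomorphism is not what the hypotheses deliver, and I do not see a way to get it directly. What Gerko's decomposition (Remark~\ref{disc0201}.5) actually gives is: from $\eq C\tri\eq B$ one has $C\simeq B\lotimes_R B^{\dagger_C}$, and from the assumed $\eq B\tri\eq{B^{\dagger_C}}$ one has $B\simeq B^{\dagger_C}\lotimes_R\rhom_R(B^{\dagger_C},B)$. Substituting yields
\[
C\simeq B^{\dagger_C}\lotimes_R B^{\dagger_C}\lotimes_R \rhom_R(B^{\dagger_C},B),
\]
which is \emph{not} $\rhom_R(B^{\dagger_C},B)\simeq C$ unless you already know $B^{\dagger_C}\sim R$. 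The missing ingredient is precisely Remark~\ref{disc0201}.4 (from \cite[(3.2)]{frankild:sdcms}): if a semidualizing complex is of the form $K\lotimes_R K\lotimes_R X$ with $K$ semidualizing, then $K\sim R$. Applying this to the display forces $B^{\dagger_C}\sim R$, whence $B\simeq\rhom_R(B^{\dagger_C},C)\sim\rhom_R(R,C)\simeq C$, the desired contradiction. You gesture at this with your ``absolute prototype'' remark, but the argument does not route through antisymmetry via $\rhom_R(B^{\dagger_C},B)\simeq C$; it routes through the square-factor rigidity. Once you replace your claimed isomorphism with the display above and invoke Remark~\ref{disc0201}.4, your proof is complete and essentially identical to the paper's.
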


An alternate proof of this result is given in Corollary~\ref{cor0403}.
One advantage of this second method is that we can describe all the
reflexivity relations between these complexes in terms of combinatorial data;
see Theorem~\ref{thm0403}.

Using the ideas from Theorem~\ref{thm0101}, we also prove the following comparison result
which is a special case of Theorem~\ref{thm0401}.

\begin{thm} \label{thm0102}
Let $\vf\colon R\to S$ be a local ring homomorphism  of 
finite flat dimension.  
If $R$ and $S$ admit dualizing complexes and if
$\vf$ is not Gorenstein, then
$\card{\catss(S)}\geq 2\card{\catss(R)}$.
\end{thm}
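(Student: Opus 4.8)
The plan is to embed two disjoint copies of $\catss(R)$ into $\catss(S)$. Fix dualizing complexes $D^R$ and $D^S$ for $R$ and $S$, and set $E:=S\lotimes_R D^R$ and $F:=\rhom_S(E,D^S)$; both are semidualizing $S$-complexes. By adjunction $F\simeq\rhom_R(D^R,D^S)$, and since $E$ is $D^S$-reflexive the evaluation morphism gives $E\lotimes_S F\simeq D^S$ — that is, $F$ is the relative dualizing complex of $\vf$ and it base-changes $D^R$ to $D^S$. Since $\vf$ has finite flat dimension, base change defines a map $\Phi\colon\catss(R)\to\catss(S)$ by $\Phi(\eq C)=\eq{S\lotimes_R C}$; it is known that $\Phi$ is injective, it plainly preserves the reflexivity ordering, and $\Phi(\eq R)=\eq S$, $\Phi(\eq{D^R})=\eq E$. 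Because base change carries $C$-reflexive complexes to $(S\lotimes_R C)$-reflexive complexes, and every homologically finite $R$-complex is $D^R$-reflexive, every $\eq B$ in $\operatorname{im}\Phi$ satisfies $\eq E\tri\eq B$. The hypothesis that $\vf$ is not Gorenstein enters precisely here: $\vf$ is Gorenstein if and only if $E$ is a dualizing complex for $S$, i.e. $\eq E=\eq{D^S}$. Hence $\eq E\neq\eq{D^S}$, and since $\eq{D^S}$ is the least element of $\catss(S)$ this forces $\eq{D^S}\notin\operatorname{im}\Phi$.

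Next I would use the order-reversing involution $\delta$ of $\catss(S)$ given by $D^S$-duality, $\delta(\eq B)=\eq{\rhom_S(B,D^S)}$, which interchanges the least element $\eq{D^S}$ with the greatest element $\eq S$, and set $\Psi:=\delta\circ\Phi$. Then $\Psi$ is injective (an injection followed by a bijection), $\Psi(\eq R)=\eq{D^S}$, and $\operatorname{im}\Psi=\delta(\operatorname{im}\Phi)$; applying $\delta$ to $\operatorname{im}\Phi\subseteq\{\eq B:\eq E\tri\eq B\}$ shows $\operatorname{im}\Psi\subseteq\{\eq B:\eq B\tri\eq F\}$, where $\eq F=\delta(\eq E)$, and $\eq F\neq\eq S$ because $\eq E\neq\eq{D^S}$. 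Now $\operatorname{im}\Phi$ and $\operatorname{im}\Psi$ are two subsets of $\catss(S)$, each of cardinality $\card{\catss(R)}$, so once they are shown to be disjoint we get $\card{\catss(S)}\geq\card{\operatorname{im}\Phi}+\card{\operatorname{im}\Psi}=2\card{\catss(R)}$, which is the assertion.

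The hard part will be the disjointness. Since $\operatorname{im}\Psi$ lies in the down-set of $\eq F$, it is enough to prove that no $\eq B$ in $\operatorname{im}\Phi$ satisfies $\eq B\tri\eq F$; equivalently, that for $B=S\lotimes_R C$ with $C$ semidualizing over $R$, the complex $F$ is not $B$-reflexive unless $\vf$ is Gorenstein. The strategy would be: assuming $F$ is $B$-reflexive, combine this with $\eq E\tri\eq B$, with the identities $\rhom_S(F,D^S)\simeq E$ and $E\lotimes_S F\simeq D^S$, and with the change-of-rings facts that $D^S$ has finite injective dimension over $R$ and $F$ has finite flat dimension over $R$, to rewrite $\rhom_S(F,B)$ and $\rhom_S(F,F)$ via tensor-evaluation and adjunction; the goal is to deduce that $\rhom_S(F,S)$ is homologically bounded, i.e. $F$ is $S$-reflexive, i.e. $\eq S\tri\eq F$, whence $\eq F=\eq S$ (as $\eq S$ is the top of $\catss(S)$) and $\vf$ is Gorenstein, contrary to hypothesis. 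I expect the real difficulty to lie entirely in this reduction — specifically in tracking which tensor-evaluation and adjunction morphisms are genuine isomorphisms when $\vf$ has finite but possibly nonzero flat dimension, and in following the homological boundedness that collapses $\eq F$ onto $\eq S$. The remaining ingredients — injectivity of $\Phi$, the two order containments, and the formal manipulations with $\delta$ — should be routine or available in the literature.
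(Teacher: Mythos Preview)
Your overall architecture is the paper's: embed $\catss(R)$ into $\catss(S)$ via base change $\Phi$, compose with the $D^S$-duality involution $\delta$ to get a second injection $\Psi=\delta\circ\Phi$, and show the two images are disjoint. You have also correctly isolated the crux --- showing that no $\eq B$ with $\eq E\tri\eq B$ can satisfy $\eq B\tri\eq F$, where $E=S\lotimes_R D^R$ and $F=\rhom_S(E,D^S)$ --- and correctly identified the target contradiction $\eq F=\eq S$.

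The gap is in your proposed argument for that step. The change-of-rings manipulations you sketch (finite flat or injective dimension of $F$ and $D^S$ over $R$, tensor-evaluation applied to $\rhom_S(F,B)$ with $B=S\lotimes_R C$) are a red herring: the disjointness actually holds for \emph{every} $\eq B\in\catss_E(S)$, not only for those in $\operatorname{im}\Phi$, so the special form $B=S\lotimes_R C$ should play no role. The paper's proof works entirely inside $\catss(S)$. Assuming $\eq E\tri\eq B$ and $\eq B\tri\eq F$ (equivalently, $\rhom_S(B,D^S)$ is $E$-reflexive), two applications of Gerko's factorization give first
\[
\rhom_S(B,D^S)\simeq F\lotimes_S\rhom_S(B,E),
\]
and then
\[
E\simeq \rhom_S(B,D^S)\lotimes_S\rhom_S(\rhom_S(B,D^S),E)\simeq F\lotimes_S X
\]
for an explicit $X$. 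Combining with $D^S\simeq E\lotimes_S F$ yields $D^S\simeq F\lotimes_S F\lotimes_S X$. Now the key input is the lemma from \cite{frankild:sdcms}: if $C\lotimes_S C\lotimes_S X$ is semidualizing then $C\sim S$. This forces $F\sim S$, hence $E\sim D^S$, contradicting the non-Gorenstein hypothesis. Your outline aims at the same contradiction, but the route via homological boundedness of $\rhom_S(F,S)$ is not visibly connected to the hypotheses $\eq E\tri\eq B\tri\eq F$, and I do not see how the tensor-evaluation identities you mention would bridge that gap.
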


\section{Complexes and local ring homomorphisms}\label{sec02}

This section contains definitions and background material for use in the sequel.

\begin{defn} \label{notn01}
An \emph{$R$-complex} is a sequence of 
$R$-module homomorphisms 
$$X =\cdots\xra{\partial^X_{n+1}}X_n\xra{\partial^X_n}
X_{n-1}\xra{\partial^X_{n-1}}\cdots$$
such that $ \partial^X_{n-1}\partial^X_{n}=0$ for each integer $n$. The
$n$th \emph{homology module} of $X$ is
$\HH_n(X):=\Ker(\partial^X_{n})/\im(\partial^X_{n+1})$.
The complex $X$ 
is \emph{homologically bounded} when $\HH_n(X)=0$ for $|n|\gg 0$.
It is  \emph{homologically finite} if the
$R$-module $\oplus_{n\in\zz}\HH_n(X)$ is finitely generated.
We frequently identify $R$-modules  with $R$-complexes concentrated in degree 0.
\end{defn}

\begin{notation}
We work in the derived category $\D(R)$.
References 
on the subject include~\cite{gelfand:moha,hartshorne:rad,verdier:cd,verdier:1};
see also~\cite{sather:bnsc}.
Given two $R$-complexes $X$ and $Y$,
the derived homomorphism and tensor product complexes
are denoted $\rhom_R(X,Y)$ and $X\lotimes_R Y$.
Isomorphisms in $\D(R)$ are identified by the symbol $\simeq$,
and isomorphisms up to shift are identified by $\sim$. 
\end{notation}

\begin{defn} \label{defn0201}
The $n$th \emph{Bass number} of $R$ is
$\mu^n_R(R)=\rank_{R/\m}(\ext^n_R(R/\m,R))$, 
and the
\emph{Bass series} of $R$ is the power series
$I^R_R(t)=\sum_{n=0}^{\infty}\mu^n_R(R)r^n$.

Let $\vf\colon R\to S$ be a local ring homomorphism of finite flat dimension,
that is, such that  $S$ admits a bounded resolution by flat $R$-modules.
The \emph{Bass series} of $\vf$ is a formal Laurent series $I_{\vf}(t)$
with nonnegative integer coefficients such that
$I^S_S(t)=I_{\vf}(t)I^R_R(t)$; see~\cite[(5.1)]{avramov:bsolrhoffd} for the
existence of $I_{\vf}(t)$. The homomorphism $\vf$ is
\emph{Gorenstein} at $\n$ if $I_{\vf}(t)=t^d$ for some integer $d$. 
\end{defn}

\begin{ex} 
Let $\vf\colon R\to S$ be a local ring homomorphism of finite flat dimension.
When  $\vf$ is flat, it is  Gorenstein if and only if
the closed fibre $S/\m S$ is Gorenstein. 
Also, if $\vf$ is surjective
with kernel
generated by an $R$-sequence, then it is  Gorenstein.
\end{ex}

Semidualizing complexes, defined next, are our main objects of study.

\begin{defn} \label{notn06}
A homologically finite $R$-complex $C$
is \emph{semidualizing} if the natural homothety morphism
$\chi^R_C\colon R\to \rhom_R(C,C)$
is an isomorphism in $\D(R)$.
An $R$-complex $D$ is \emph{dualizing} if it is semidualizing
and has finite injective dimension.
Let $\cats(R)$ denote the set of shift-isomorphism classes of semidualizing
$R$-complexes, and let $\is C$ denote the shift-isomorphism class of
a semidualizing $R$-complex $C$.

When $C$ is a finitely generated $R$-module, it is semidualizing if and only if
$\ext^{\geq 1}_R(C,C)=0$ and the natural homothety map
$R\to \hom_R(C,C)$
is an isomorphism. 
Let $\cats_0(R)$ denote the set of isomorphism classes of semidualizing
$R$-modules, and let $\is C$ denote the isomorphism class of
a semidualizing $R$-module $C$.
The natural identification of an $R$-module with an $R$-complex
concentrated in degree 0 provides a natural inclusion
$\cats_0(R)\subseteq\cats(R)$.
\end{defn}

\begin{disc} \label{sp03}
Let $\vf\colon R\to S$ be a local ring homomorphism
of finite flat dimension, and fix semidualizing
$R$-complexes $B,C$. The complex $S\lotimes_R C$ is
semidualizing for $S$ by~\cite[(5.7)]{christensen:scatac}.
The complex $S\lotimes_R C$ is
dualizing for $S$ if and only if
$C$ is dualizing for $R$ and $\vf$ is Gorenstein
by~\cite[(5.1)]{avramov:lgh}.
We have $S\lotimes_R B\simeq S\lotimes_R C$ in $\D(S)$
if and only if $B\simeq C$ in $\D(R)$ by~\cite[(1.10)]{frankild:rrhffd}.
Hence, the function $\cats(\vf)\colon\cats(R)\to\cats(S)$ given by
$\eq{C}\mapsto\eq{S\lotimes_R C}$ is well-defined and injective.
\end{disc}

The  next definition is due to Christensen~\cite{christensen:scatac}
and Hartshorne~\cite{hartshorne:rad} and 
will be used primarily to compare semidualizing complexes.

\begin{defn} \label{notn08}
Let $C$ be a semidualizing $R$-complex.
A homologically finite $R$-complex $X$ is \emph{$C$-reflexive} 
when 
the $R$-complex $\rhom_R(X,C)$ is homologically finite, and
the natural biduality morphism $\delta^C_X\colon X\to\rhom_R(\rhom_R(X,C),C)$
is an isomorphism in $\D(R)$.
Define an order on $\catss(R)$ by writing $\eq{C}\tri\eq{B}$ when
$B$ is $C$-reflexive. Also, write $\eq{C}\trineq\eq{B}$ when
$\eq{C}\tri\eq{B}$ and $\eq{C}\neq\eq{B}$. 
For each $\eq C\in\cats(R)$ set
$\cats_C(R)=\{\eq B\in\cats(R)\mid\eq C\tri\eq B\}$.
\end{defn}

\begin{disc}  \label{disc0201}
Let $A$, $B$ and $C$ be semidualizing $R$-complexes.

1. If $B$ is $C$-reflexive, then $\rhom_R(B,C)$ is
semidualizing and $C$-reflexive by~\cite[(2.12)]{christensen:scatac}.
Thus, the map $\Phi_C\colon\cats_C(R)\to\cats_C(R)$ given by $\eq{B}\mapsto
\eq{\rhom_R(B,C)}$ is  well-defined.  By definition, this map is 
also an involution (i.e., $\Phi_C^2=\id_{\cats^{}_C(R)}$) and hence it is bijective.
From~\cite[(3.9)]{frankild:rrhffd} we know that $\Phi_C$ is reverses the reflexivity ordering:
if $\eq A,\eq B\in\catss_C(R)$, then
$\eq A\tri\eq B$ if and only if $\Phi_C(\is B)\tri \Phi_C(\is A)$, that is, 
if and only if
$\eq{\rhom_R(B,C)}\tri\eq{\rhom_R(A,C)}$.

2. Assume that $C$ is a semidualizing $R$-module.
Using~\cite[(3.5)]{frankild:rrhffd}
we see that,
if $B$ is $C$-reflexive, then $B$ is isomorphic up to shift with a semidualizing $R$-module,
and hence so is $\rhom_R(B,C)$.
In particular, we have $\catss_C(R)\subseteq\catss_0(R)$.

3. If $D$ is a dualizing $R$-complex, then $\eq D\tri\eq C$
by~\cite[(V.2.1)]{hartshorne:rad},
i.e., we have $\cats_D(R)=\cats(R)$.

4. Let $X$ be an $R$-complex such that $\HH_i(X)$ is finitely generated for each $i$
and $\HH_i(X)=0$ for $i\ll 0$.  If  $C\lotimes_R C\lotimes_R X$ is
semidualizing, then $C\sim R$ by~\cite[(3.2)]{frankild:sdcms}.

5. By~\cite[(3.3)]{gerko:sdc}, given a chain
$\eq{C_n}\trineq\eq{C_{n-1}} \trineq\cdots \trineq\eq{C_0}$ in $\catss (R)$, one has
\begin{align*}
C_n&\simeq
C_0\lotimes_R\rhom_R(C_0,C_1)\lotimes_R\cdots\lotimes_R\rhom_R(C_{n-1},C_n).
\end{align*}
\end{disc}

\begin{disc}  \label{disc0202}
Let $\vf\colon R\to S$ be a local ring homomorphism of finite flat dimension.
The map $\cats(\vf)\colon\cats(R)\to\cats(S)$ from Remark~\ref{sp03}
respects the reflexivity orderings perfectly by~\cite[(4.8)]{frankild:rrhffd}:
if $\is B,\is C\in\catss(R)$, then 
$\eq C\tri\eq B$ if and only if 
$\cats(\vf)(\eq C)\tri \cats(\vf)(\eq B)$
that is, if and only if 
$\eq{S\lotimes_R C}\tri\eq{S\lotimes_R B}$.
\end{disc}

The next definition is due to 
Avramov and Foxby~\cite{avramov:rhafgd}
and Christensen~\cite{christensen:scatac}.

\begin{defn} \label{notn99}
Let $C$ be a semidualizing $R$-complex.
A homologically bounded $R$-complex $X$ is in the 
\emph{Bass class} with respect to $C$, denoted $\catb_C(R)$,
when 
the $R$-complex $\rhom_R(C,X)$ is homologically bounded and
the natural evaluation morphism $\xi^C_X\colon C\lotimes_R\rhom_R(C,X)\to X$
is an isomorphism in $\D(R)$.
A homologically bounded $R$-complex $X$ is in the 
\emph{Auslander class} with respect to $C$, denoted $\cata_C(R)$,
when 
the $R$-complex $C\lotimes_RX$ is homologically bounded and
the natural morphism $\gamma^C_X\colon X\to \rhom_R(C,C\lotimes_RX)$
is an isomorphism in $\D(R)$.
\end{defn}

\begin{disc}  \label{disc9801}
Let $B$ and $C$ be semidualizing $R$-complexes.
Then $B\in \catb_C(R)$
if and only if $\eq B\tri\eq C$; see~\cite[(1.3)]{frankild:rbsc}.
One has $B\in\cata_C(R)$ if and only if $B\lotimes_RC$ is a semidualizing $R$-complex
by~\cite[(4.8)]{frankild:rbsc}.
\end{disc}

\section{Bounding the number of elements in $\catss(R)$} \label{sec03}

This section contains
the proofs of Theorems~\ref{thm0101}
and~\ref{thm0102} from the introduction.
We begin with two lemmas.

\begin{lem} \label{lem0401}
Let $A$, $B$ and $C$ be semidualizing $R$-complexes
such that 
$B$ and $C$ are $A$-reflexive and
$B$ is $C$-reflexive. If $C\not\sim A$,
then $\rhom_R(B,A)$ is not $C$-reflexive.
\end{lem}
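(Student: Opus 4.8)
The plan is to prove the contrapositive: assume that $\rhom_R(B,A)$ is $C$-reflexive, and deduce $C\sim A$. Write $M:=\rhom_R(B,A)$, $N:=\rhom_R(M,C)$ and $Q:=\rhom_R(B,C)$. Since $B$ and $C$ are $A$-reflexive and, by hypothesis, $M$ and $B$ are $C$-reflexive, Remark~\ref{disc0201}(1) shows that $\rhom_R(C,A)$, $M$, $N$ and $Q$ are all semidualizing $R$-complexes, and it also supplies the biduality isomorphisms $\rhom_R(\rhom_R(C,A),A)\simeq C$ and $\rhom_R(M,A)\simeq B$.

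Next I would record four evaluation isomorphisms from Remark~\ref{disc9801}: the relation $\eq X\tri\eq Y$ places $X$ in $\catb_Y(R)$, hence gives $X\simeq Y\lotimes_R\rhom_R(Y,X)$ in $\D(R)$. Applied to the reflexivity relations in play, this yields
\[
A\simeq C\lotimes_R\rhom_R(C,A),\qquad A\simeq B\lotimes_R M,\qquad C\simeq B\lotimes_R Q,\qquad C\simeq M\lotimes_R N .
\]
Splicing the first and last of these gives $A\simeq M\lotimes_R N\lotimes_R\rhom_R(C,A)$, and comparing with the second gives $M\lotimes_R\bigl(N\lotimes_R\rhom_R(C,A)\bigr)\simeq M\lotimes_R B$.

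The crux of the argument — and the step I expect to be the main obstacle — is to cancel the common factor $M$ from this last isomorphism. One cannot do this by invoking transitivity of the reflexivity order, which is exactly the difficulty the lemma must circumvent; instead it can be done homologically. Each of $N\lotimes_R\rhom_R(C,A)$ and $B$ becomes semidualizing after tensoring with $M$, since both tensor products are isomorphic to $A$; hence by Remark~\ref{disc9801} both complexes lie in the Auslander class $\cata_M(R)$. The corresponding morphisms $\gamma^M$ therefore identify $N\lotimes_R\rhom_R(C,A)$ with $\rhom_R\bigl(M,\,M\lotimes_R(N\lotimes_R\rhom_R(C,A))\bigr)$ and $B$ with $\rhom_R(M,\,M\lotimes_R B)$; applying the functor $\rhom_R(M,-)$ to the isomorphism $M\lotimes_R\bigl(N\lotimes_R\rhom_R(C,A)\bigr)\simeq M\lotimes_R B$ then produces $N\lotimes_R\rhom_R(C,A)\simeq B$.

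Finally, substituting $B\simeq N\lotimes_R\rhom_R(C,A)$ into $C\simeq B\lotimes_R Q$ gives $C\simeq\rhom_R(C,A)\lotimes_R(N\lotimes_R Q)$, and therefore $A\simeq C\lotimes_R\rhom_R(C,A)\simeq\rhom_R(C,A)\lotimes_R\rhom_R(C,A)\lotimes_R(N\lotimes_R Q)$. Since $N$ and $Q$ are semidualizing, $N\lotimes_R Q$ is homologically finite, so Remark~\ref{disc0201}(4) — applied with the semidualizing complex $\rhom_R(C,A)$ playing the role of $C$ and $X=N\lotimes_R Q$ — forces $\rhom_R(C,A)\sim R$. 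Using the biduality isomorphism once more, $C\simeq\rhom_R(\rhom_R(C,A),A)\sim\rhom_R(R,A)\simeq A$, so $C\sim A$; the lemma follows by contraposition. (Note that the argument never uses distinctness of the classes $\eq A$, $\eq B$, $\eq C$, so no case analysis on degenerate configurations is needed.)
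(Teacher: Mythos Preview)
Your overall strategy matches the paper's: reach an isomorphism of the form
\[
A\;\simeq\;\rhom_R(C,A)\lotimes_R\rhom_R(C,A)\lotimes_R\bigl(N\lotimes_R Q\bigr)
\]
and then invoke Remark~\ref{disc0201}.4 to force $\rhom_R(C,A)\sim R$. The four evaluation isomorphisms you record and the final biduality step are all fine.

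The gap is in the cancellation step. You want to conclude that $N\lotimes_R\rhom_R(C,A)\in\cata_M(R)$ from the fact that $M\lotimes_R\bigl(N\lotimes_R\rhom_R(C,A)\bigr)\simeq A$ is semidualizing, citing Remark~\ref{disc9801}. But that remark is stated for a \emph{semidualizing} complex $B$: it says $B\in\cata_C(R)$ if and only if $B\lotimes_R C$ is semidualizing. You have not shown that $N\lotimes_R\rhom_R(C,A)$ is semidualizing (equivalently, that $\rhom_R(C,A)\in\cata_N(R)$), and this does not follow from anything established so far; indeed, if you knew it, you could finish immediately without ever mentioning $M$. Nor have you verified the homological boundedness needed even to speak of membership in $\cata_M(R)$. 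So the cancellation $N\lotimes_R\rhom_R(C,A)\simeq B$ is not justified as written.

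The paper sidesteps this entirely. Instead of cancelling $M$ from two expressions for $A$, it first proves the decomposition
\[
M=\rhom_R(B,A)\;\simeq\;\rhom_R(C,A)\lotimes_R\rhom_R(B,C)=\rhom_R(C,A)\lotimes_R Q
\]
via Hom--tensor adjointness and the $A$-reflexivity of $C$ (this is Remark~\ref{disc0201}.1 plus Remark~\ref{disc0201}.5 applied to the chain $\eq{\rhom_R(B,A)}\tri\eq{\rhom_R(C,A)}$). Substituting this into your isomorphism $C\simeq M\lotimes_R N$ gives $C\simeq\rhom_R(C,A)\lotimes_R(N\lotimes_R Q)$ directly, with no cancellation required, and the argument concludes exactly as you do. Replacing your cancellation paragraph with this single adjunction computation repairs the proof.
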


\begin{proof}
Remark~\ref{disc0201}.1 implies that 
$\rhom_R(B,A)$ and $\rhom_R(C,A)$
are semidualizing $R$-complexes and that
$\rhom_R(C,A)$ is $\rhom_R(B,A)$-reflexive.
Remark~\ref{disc0201}.5  
provides the first isomorphism in the next sequence
\begin{align*}
\rhom_R(B,A)
&\simeq  \rhom_R(C,A)\lotimes_R \rhom_R(\rhom_R(C,A),\rhom_R(B,A))\\
&\simeq  \rhom_R(C,A)\lotimes_R \rhom_R(\rhom_R(C,A)\lotimes_RB,A)\\
&\simeq  \rhom_R(C,A)\lotimes_R \rhom_R(B,\rhom_R(\rhom_R(C,A),A))\\
&\simeq \rhom_R(C,A)\lotimes_R \rhom_R(B,C).
\end{align*}
The second and third isomorphisms are Hom-tensor adjointness,
and the fourth isomorphism comes from the fact that $C$ is $A$-reflexive.

Set $X=\rhom_R(B,C)\lotimes_R \rhom_R(\rhom_R(B,A),C)$,
and suppose that the complex $\rhom_R(B,A)$ is $C$-reflexive.
Remark~\ref{disc0201}.5 explains the first isomorphism in the next sequence,
and the second isomorphism  is from the previous display
\begin{align*}
C
&\simeq \rhom_R(B,A)\lotimes_R \rhom_R(\rhom_R(B,A),C)\\
&\simeq \rhom_R(C,A)\lotimes_R \rhom_R(B,C)\lotimes_R \rhom_R(\rhom_R(B,A),C)\\
&\simeq \rhom_R(C,A)\lotimes_R X.
\end{align*}
Similarly, this yields the next sequence
\begin{align*}
A
&\simeq \rhom_R(C,A)\lotimes_R C\\
&\simeq \rhom_R(C,A)\lotimes_R 
\rhom_R(C,A)\lotimes_R X.
\end{align*}
It follows from Remark~\ref{disc0201}.4 that
$\rhom_R(C,A)\sim R$ and hence
$$C\simeq\rhom_R(\rhom_R(C,A),A)\sim\rhom_R(R,A)\simeq A$$
since $C$ is $A$-reflexive.
This  contradicts the assumption $C\not\sim A$.
\end{proof}

Note that the hypothesis $\cats_C(R)\subseteq\cats_A(R)$ from the next result
is satisfied when either $A$ is dualizing for $R$ or the reflexivity ordering on
$\cats(R)$ is transitive.

\begin{lem} \label{prop0401}
Let $A$ and $C$ be semidualizing $R$-complexes
such that $C$ is $A$-reflexive and $C\not\sim A$.
Assume that $\cats_C(R)\subseteq\cats_A(R)$, e.g., that the reflexivity ordering
on $\catss(R)$ is transitive.
The injection 
$\Phi_A\colon\cats_A(R)\to\cats_A(R)$ given by
$\eq{B}\mapsto\eq{\rhom_R(B,A)}$
maps
$\catss_C(R)$ into $\catss_A(R)\smallsetminus\catss_C(R)$.
In particular $\card{\catss_A(R)}\geq 2\card{\catss_C(R)}$.
\end{lem}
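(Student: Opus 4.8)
The plan is to reduce the statement entirely to Lemma~\ref{lem0401}, which already contains all the homological content; what remains is elementary set-theoretic bookkeeping. First I would fix a class $\eq{B}\in\catss_C(R)$, so that $B$ is $C$-reflexive. The inclusion hypothesis $\catss_C(R)\subseteq\catss_A(R)$ forces $\eq{B}\in\catss_A(R)$, i.e., $B$ is $A$-reflexive; by hypothesis $C$ is also $A$-reflexive and $C\not\sim A$. Thus the triple $A,B,C$ satisfies the hypotheses of Lemma~\ref{lem0401}, which gives that $\rhom_R(B,A)$ is \emph{not} $C$-reflexive, that is, $\Phi_A(\eq{B})=\eq{\rhom_R(B,A)}\notin\catss_C(R)$. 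On the other hand, Remark~\ref{disc0201}.1 (applied with $C$ replaced by $A$, using that $C$ is $A$-reflexive) shows that $\Phi_A$ is a well-defined involution on $\catss_A(R)$, so $\Phi_A(\eq{B})\in\catss_A(R)$. Combining these, $\Phi_A$ carries $\catss_C(R)$ into $\catss_A(R)\smallsetminus\catss_C(R)$, which is the first assertion.

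For the cardinality inequality, I would observe that $\Phi_A$ is injective, since it is an involution, so the image $\Phi_A(\catss_C(R))$ has the same cardinality as $\catss_C(R)$. By the first part, $\Phi_A(\catss_C(R))$ and $\catss_C(R)$ are disjoint subsets of $\catss_A(R)$, whence
$$\card{\catss_A(R)}\geq\card{\catss_C(R)}+\card{\Phi_A(\catss_C(R))}=2\card{\catss_C(R)}.$$
(This argument is vacuous but still correct when $\catss_C(R)=\emptyset$.) For the parenthetical remark about transitivity: if $\eq{B}\in\catss_C(R)$, then $\eq{C}\tri\eq{B}$, and since $C$ is $A$-reflexive we have $\eq{A}\tri\eq{C}$; transitivity of the reflexivity ordering then gives $\eq{A}\tri\eq{B}$, i.e., $\eq{B}\in\catss_A(R)$, so indeed $\catss_C(R)\subseteq\catss_A(R)$ in that case.

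\textbf{Main obstacle.} There is no genuine obstacle remaining once Lemma~\ref{lem0401} is in hand: that lemma is precisely the tool that makes $\catss_C(R)$ and $\Phi_A(\catss_C(R))$ disjoint. The only points requiring a little care are (i) verifying that all three hypotheses of Lemma~\ref{lem0401} are met — the non-obvious one being $A$-reflexivity of $B$, which is exactly where the inclusion hypothesis $\catss_C(R)\subseteq\catss_A(R)$ (equivalently, transitivity or the existence of a dualizing complex) is used — and (ii) recording that $\Phi_A$ is injective so that passing to the image does not shrink the count.
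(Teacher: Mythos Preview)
Your proposal is correct and follows exactly the paper's approach: the paper's proof simply says the first conclusion ``is a reformulation of Lemma~\ref{lem0401}; see also Remark~\ref{disc0201}.1'' and then argues the cardinality bound from injectivity and disjointness of $\catss_C(R)$ and $\Phi_A(\catss_C(R))$, precisely as you do. Your write-up just unpacks those references in more detail (including the transitivity remark), with no substantive deviation.
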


\begin{proof}
The first conclusion is a reformulation of
Lemma~\ref{lem0401};
see also Remark~\ref{disc0201}.1. 
For the second conclusion, note that
$\Phi_A$ is injective by Remark~\ref{disc0201}.1,
so $\Phi_A(\cats_C(R))$ and $\cats_C(R)$
have the same cardinality.
Since $\Phi_A(\cats_C(R))\subset\catss_A(R)\smallsetminus\catss_C(R)$,
we conclude that $\catss_C(R)$ and $\Phi_A(\cats_C(R))$ are disjoint subsets of
$\catss_A(R)$ such that $\card{\catss_C(R)}=\card{\Phi_A(\cats_C(R))}$.
The second conclusion now follows. 
\end{proof}

The next result contains Theorem~\ref{thm0101} from the introduction.

\begin{thm} \label{thm0402}
Assume that $\catss(R)$ admits a chain
$\eq{C_n}\trineq\eq{C_{n-1}} \trineq\cdots \trineq\eq{C_0}$ such that
$\catss_{C_0}(R)\subseteq\cdots \subseteq\catss_{C_{n-1}}(R) \subseteq\catss_{C_n}(R)$.
Then  $\card{\catss(R)}\geq 2^n$.
If $\eq{C_n}\in\catss_0(R)$,
then  $\card{\catss_0(R)}\geq 2^n$.
\end{thm}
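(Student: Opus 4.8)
The plan is to induct on $n$, establishing the slightly stronger assertion $\card{\catss_{C_n}(R)}\geq 2^n$; since $\catss_{C_n}(R)$ is by definition a subset of $\catss(R)$, this yields $\card{\catss(R)}\geq 2^n$ at once. In the base case $n=0$ it suffices to observe that $C_0$ is $C_0$-reflexive --- indeed $\rhom_R(C_0,C_0)\simeq R$ is homologically finite and $\delta^{C_0}_{C_0}$ is an isomorphism --- so that $\eq{C_0}\in\catss_{C_0}(R)$ and $\card{\catss_{C_0}(R)}\geq 1=2^0$.

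For the inductive step, fix $n\geq 1$ and assume the statement for chains of length $n-1$. The truncated chain $\eq{C_{n-1}}\trineq\cdots\trineq\eq{C_0}$, together with the inclusions $\catss_{C_0}(R)\subseteq\cdots\subseteq\catss_{C_{n-1}}(R)$, satisfies the hypotheses of the theorem with $n$ replaced by $n-1$; hence $\card{\catss_{C_{n-1}}(R)}\geq 2^{n-1}$. I would then apply Lemma~\ref{prop0401} with $A=C_n$ and $C=C_{n-1}$: the relation $\eq{C_n}\trineq\eq{C_{n-1}}$ says precisely that $C_{n-1}$ is $C_n$-reflexive with $C_{n-1}\not\sim C_n$, and the inclusion $\catss_{C_{n-1}}(R)\subseteq\catss_{C_n}(R)$ is part of the given chain of inclusions. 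The lemma then gives $\card{\catss_{C_n}(R)}\geq 2\card{\catss_{C_{n-1}}(R)}\geq 2^n$, completing the induction and hence the proof that $\card{\catss(R)}\geq 2^n$.

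For the module statement, suppose in addition that $\eq{C_n}\in\catss_0(R)$, so that $C_n$ is isomorphic up to shift with a semidualizing $R$-module. Then Remark~\ref{disc0201}.2 gives $\catss_{C_n}(R)\subseteq\catss_0(R)$, and therefore $\card{\catss_0(R)}\geq\card{\catss_{C_n}(R)}\geq 2^n$.

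Once Lemma~\ref{prop0401} is in hand the argument is little more than bookkeeping, and I expect the only point requiring care is verifying that the truncation of the chain --- and of its accompanying list of inclusions --- still satisfies the theorem's hypotheses, so that the induction hypothesis applies verbatim. All the real content sits in Lemmas~\ref{lem0401} and~\ref{prop0401} --- in particular the fact that $\Phi_A$ maps $\catss_C(R)$ into $\catss_A(R)\smallsetminus\catss_C(R)$ --- which in turn rests on the cancellation property of Remark~\ref{disc0201}.4.
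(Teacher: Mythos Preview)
Your proposal is correct and follows essentially the same approach as the paper: both arguments establish $\card{\catss_{C_j}(R)}\geq 2^j$ inductively by applying Lemma~\ref{prop0401} at each step to double the lower bound, and both handle the module case via Remark~\ref{disc0201}.2. The only cosmetic difference is that the paper inducts on the index $j$ within the fixed chain while you induct on the chain length $n$ via truncation, but the content is identical.
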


\begin{proof}
For the first statement,
we show by induction on $j$ that 
$\card{\catss_{C_{j}}(R)}\geq 2^j$.
For $j=0$ this is straightforward.
For the inductive step assume that $\card{\catss_{C_{j}}(R)}\geq 2^{j}$.
Lemma~\ref{prop0401} implies that 
$\card{\catss_{C_{j+1}}(R)}\geq2\card{\catss_{C_{j}}(R)}\geq 2^{j+1}$
as desired.

When
$\is{C_n}\in\catss_0(R)$, we have $\catss_{C_j}(R)\subseteq\catss_{C_n}(R)\subseteq\catss_0(R)$
for $j=0,\ldots,n$
by Remark~\ref{disc0201}.2.
Thus, the second statement is proved like the first statement. 
\end{proof}

We next provide an explicit description of the
$2^n$ semidualizing complexes that are guaranteed to exist by Theorem~\ref{thm0402}.
A  second description
(in the case $C_0\simeq R$) is given in Theorem~\ref{thm0403}.

\begin{disc} \label{d1101}
Assume that $\catss(R)$ admits a chain
$\eq{C_n}\trineq\eq{C_{n-1}} \trineq\cdots \trineq\eq{C_0}$ such that
$\catss_{C_0}(R)\subseteq\cdots \subseteq\catss_{C_{n-1}}(R) \subseteq\catss_{C_n}(R)$.
Given $R$-complexes $C$ and $B$, set
$C^{\dagger_B}=\rhom_R(C,B)$.

The next diagram shows the steps $n=0,1,2,3$ of the induction argument
in the proof of Theorem~\ref{thm0402}, with some of the reflexivity relations indicated with edges:
$$\xymatrix@C=5mm{
C_0&
C_0 \ar@{-}[d] 
& C_0 \ar@{-}[d]\ar@{-}[rd]
&&& C_0 \ar@{-}[d]\ar@{-}[rd]\ar@{-}[ldd] 
\\
&C_0^{\dagger_{C_1}}
&C_0^{\dagger_{C_1}}\ar@{-}[rd]
&C_0^{\dagger_{C_1}\dagger_{C_2}}\ar@{-}[d]
&&C_0^{\dagger_{C_1}}\ar@{-}[rd]\ar@{-}[ldd]
&C_0^{\dagger_{C_1}\dagger_{C_2}}\ar@{-}[d]\ar@{-}[ldd]
\\
&&&C_0^{\dagger_{C_2}}
&C_0^{\dagger_{C_2}\dagger_{C_3}}\ar@{-}[d]\ar@{-}[rd]
&&C_0^{\dagger_{C_2}}\ar@{-}[ldd]
\\
&&&&C_0^{\dagger_{C_1}\dagger_{C_2}\dagger_{C_3}}\ar@{-}[rd]
&C_0^{\dagger_{C_1}\dagger_{C_3}}\ar@{-}[d]
\\
&&&&&C_0^{\dagger_{C_3}}.
}$$

More generally, for each sequence of integers
$\ui=\{i_1,\ldots,i_j\}$ such that $j\geq 0$ and $1\leq i_1<\cdots<i_j\leq n$,
the $R$-complex 
$C_{\ui}=C_0^{\dagger_{C_{i_1}}\dagger_{C_{i_2}}\dagger_{C_{i_3}}\cdots\dagger_{C_{i_j}}}$
is semidualizing. 
(When
$j=0$ we have $C_{\ui}=C_\emptyset= C_0$.)
The   classes  $\eq{C_{\ui}}$
are parametrized by the allowable sequences $\ui$,
of which there are exactly $2^n$. These are precisely the $2^n$
classes constructed in the proof of Theorem~\ref{thm0402}.

If the $C_i$ are all modules, then we may replace $\rhom$ with $\Hom$
to obtain a description of the semidualizing modules that the theorem
guarantees to exist.
\end{disc}

The next result  contains
Theorem~\ref{thm0102} from the introduction.

\begin{thm} \label{thm0401}
Let $\vf\colon R\to S$ be a local ring homomorphism  of 
finite flat dimension.  Assume that $\catss(R)$ has a unique minimal element
$\eq{A}$. (For instance, this holds when $R$ admits a dualizing complex.)
If $S$ admits a dualizing complex $D^S$ and if
$\vf$ is not Gorenstein at $\n$, then
$\card{\catss(S)}\geq 2\card{\catss(R)}$.
\end{thm}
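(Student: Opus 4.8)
The plan is to transport the semidualizing complexes of $R$ into $\catss(S)$ along the injection $\catss(\vf)$ of Remark~\ref{sp03}, and then to produce a second copy of that set inside $\catss(S)$, disjoint from the first, by applying the dualizing involution $\Phi_{D^S}$. The mechanism forcing the disjointness is Lemma~\ref{prop0401}, applied with $S$ in place of $R$, taking $D^S$ in the role of its ``$A$'' and $C:=S\lotimes_R A$ in the role of its ``$C$''. The non-Gorenstein hypothesis will be used at exactly one place: it is what forces $S\lotimes_R A$ \emph{not} to be dualizing for $S$.

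First I would record what is needed about the map $\catss(\vf)\colon\catss(R)\to\catss(S)$, $\eq{B}\mapsto\eq{S\lotimes_R B}$: it is injective (Remark~\ref{sp03}), and it is perfect for the reflexivity orders, so $\eq{B}\tri\eq{B'}$ in $\catss(R)$ if and only if $\eq{S\lotimes_R B}\tri\eq{S\lotimes_R B'}$ in $\catss(S)$ (Remark~\ref{disc0202}); moreover $C=S\lotimes_R A$ is semidualizing for $S$ (Remark~\ref{sp03}). The hypothesis that $\eq{A}$ is the unique minimal element of $\catss(R)$ gives $\catss_A(R)=\catss(R)$, i.e.\ $\eq{A}\tri\eq{B}$ for every $\eq{B}\in\catss(R)$; when $R$ admits a dualizing complex this is automatic, since a dualizing complex is a least element of $\catss(R)$ by Remark~\ref{disc0201}.3 and so must coincide with $\eq{A}$. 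Feeding $\eq{A}\tri\eq{B}$ through Remark~\ref{disc0202} yields $\eq{C}\tri\eq{S\lotimes_R B}$, so $\catss(\vf)$ maps $\catss(R)$ into $\catss_C(S)$; since $\catss(\vf)$ is injective, $\card{\catss_C(S)}\geq\card{\catss(R)}$.

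Next I would check the hypotheses of Lemma~\ref{prop0401} for the pair $D^S,C$ of semidualizing $S$-complexes. Two of the three are automatic because $D^S$ is dualizing: $C$ is $D^S$-reflexive, and $\catss_C(S)\subseteq\catss_{D^S}(S)=\catss(S)$ by Remark~\ref{disc0201}.3. The remaining hypothesis, $C\not\sim D^S$, is the crux: by Remark~\ref{sp03} the complex $S\lotimes_R A$ is dualizing for $S$ exactly when $A$ is dualizing for $R$ \emph{and} $\vf$ is Gorenstein at $\n$, and since $\vf$ is not Gorenstein at $\n$ this fails, so $C=S\lotimes_R A\not\sim D^S$. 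Lemma~\ref{prop0401} now gives $\card{\catss(S)}=\card{\catss_{D^S}(S)}\geq 2\card{\catss_C(S)}$, and combining this with $\card{\catss_C(S)}\geq\card{\catss(R)}$ from the previous step yields $\card{\catss(S)}\geq 2\card{\catss(R)}$.

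The only genuinely non-formal step in this plan is the assertion $C\not\sim D^S$, and even that is short once Remark~\ref{sp03} is in hand; everything else is bookkeeping with $\catss(\vf)$ and the already-established Lemma~\ref{prop0401}. If one prefers not to quote Lemma~\ref{prop0401}, one can unwind it: apply Lemma~\ref{lem0401} (with $S$ in place of $R$) to the triple $D^S$, $S\lotimes_R B$, $C$ for each $\eq{B}\in\catss(R)$ to see that $\rhom_S(S\lotimes_R B,D^S)$ is not $C$-reflexive, hence is not isomorphic up to shift to any $S\lotimes_R B'$ with $\eq{B'}\in\catss(R)$ (such a complex would satisfy $\eq{C}\tri\eq{S\lotimes_R B'}$ by the previous paragraph). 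Thus $\catss(\vf)(\catss(R))$ and $\Phi_{D^S}\bigl(\catss(\vf)(\catss(R))\bigr)$ are disjoint subsets of $\catss(S)$, each of cardinality $\card{\catss(R)}$, which again gives $\card{\catss(S)}\geq 2\card{\catss(R)}$.
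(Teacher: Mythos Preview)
Your proposal is correct and follows essentially the same approach as the paper: transport $\catss(R)$ into $\catss_{S\lotimes_R A}(S)$ via the injective map $\catss(\vf)$, use the non-Gorenstein hypothesis together with Remark~\ref{sp03} to see that $S\lotimes_R A\not\sim D^S$, and then apply Lemma~\ref{prop0401} with the pair $(D^S,\,S\lotimes_R A)$ to double the count. The optional unwinding via Lemma~\ref{lem0401} that you sketch is just an expansion of what Lemma~\ref{prop0401} already packages.
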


\begin{proof}
Let $\catss(\vf)\colon\catss(R)\to\catss(S)$ be the induced map from
Remark~\ref{sp03}. Our assumption on $A$ implies that $\cats(R)=\cats_A(R)$.
Remark~\ref{disc0202} provides the first containment in the next sequence
while Remark~\ref{disc0201}.3 explains the last equality
$$\cats(\vf)(\cats(R))=\cats(\vf)(\cats_A(R))\subseteq
\catss_{S\lotimes_R A}(S)
\subseteq\catss(S)=\cats_{D^S}(S).$$  
Since $\vf$ is not Gorenstein, Remark~\ref{sp03} implies
$D^S\not\sim S\lotimes_R A$. 
The injectivity of $\cats(\vf)$ explains the first inequality in the next sequence
$$2\card{\catss(R)}=2\card{\catss_A(R)}\leq
2\card{\catss_{S\lotimes_R A}(S)}\leq\card{\catss_{D^S}(S)}=\card{\catss(S)}
$$
while the second inequality is from Lemma~\ref{prop0401}.
\end{proof}



\section{Structure on the Set of Semidualizing $R$-complexes} \label{sec99}

This section is devoted to providing a second description of the $2^n$ semidualizing
complexes that are guaranteed to exist by Theorem~\ref{thm0402}.
This description is contained in Theorems~\ref{thm9901} and~\ref{thm0403}.
It says, in particular, that these complexes form a dual version of the ``LCM lattice''
on $n$ formal letters. Note that the version for semidualizing modules
has the same form with derived functors replaced by
non-derived functors; hence, we do not state it explicitly.
We begin with some notation for use throughout the section and five
supporting lemmas.

\begin{assumption} \label{ass}
Assume throughout this section that 
$\catss(R)$ admits a chain 
$\eq{C_n}\trineq\eq{C_{n-1}} \trineq\cdots \trineq\eq{C_0}$.
For $i=1,\ldots,n$ set $B_i=\rhom_R(C_{i-1},C_i)$.
Set $B_{\emptyset}=C_0$.
For each sequence $\ui=\{i_1,\ldots,i_j\}$ such that $1\leq i_1<\cdots<i_j\leq n$
and $j\geq 1$,
set $B_{\ui}=B_{i_1}\lotimes_R\cdots\lotimes_RB_{i_j}$.
(When $j=1$, we have $B_{\ui}=B_{\{i_1\}}=B_{i_1}$.)
\end{assumption}

\begin{lem} \label{lem9901}
Under the hypotheses of Assumption~\ref{ass},
if $1\leq i<j\leq n$, then
$$\rhom_R(C_{i-1},C_{j-1})\lotimes_R
B_j
\simeq\rhom_R(C_{i-1},C_{j}).
$$
\end{lem}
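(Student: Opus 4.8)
The statement is essentially a "change of target" identity for derived Hom composed with tensor: it says that knowing $C_{i-1}$ is $C_{j-1}$-reflexive and $C_{j-1}$ is $C_j$-reflexive lets us splice the two dualities together. The natural approach is to realize both sides as instances of Hom-tensor adjointness, using reflexivity of the intermediate complexes to cancel double duals. So first I would observe that since $\eq{C_n}\trineq\cdots\trineq\eq{C_0}$ is a chain, $C_{i-1}$ is $C_{j-1}$-reflexive and $C_{j-1}$ is $C_j$-reflexive (this uses $i\le j-1$ when $i<j$; the case $j=i+1$ makes $C_{i-1}$ trivially $C_{i-1}$-reflexive, which is fine). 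In particular, by Remark~\ref{disc0201}.1, $B_j=\rhom_R(C_{j-1},C_j)$ is semidualizing and $C_{j-1}$-reflexive.

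**Main computation.** The key move is the chain of isomorphisms
\begin{align*}
\rhom_R(C_{i-1},C_{j-1})\lotimes_R B_j
&\simeq \rhom_R(C_{i-1},C_{j-1}\lotimes_R B_j)\\
&\simeq \rhom_R(C_{i-1},C_{j-1}\lotimes_R\rhom_R(C_{j-1},C_j))\\
&\simeq \rhom_R(C_{i-1},C_j),
\end{align*}
where the first isomorphism is tensor-evaluation (valid because $C_{i-1}$ is homologically finite and $C_{j-1}$ is $C_{j-1}$-reflexive, equivalently has finite $C_{j-1}$-Gorenstein dimension in the appropriate sense — I would cite the standard tensor-evaluation isomorphism from the references), and the last isomorphism is the statement that the evaluation morphism $\xi^{C_{j-1}}_{C_j}\colon C_{j-1}\lotimes_R\rhom_R(C_{j-1},C_j)\to C_j$ is an isomorphism; this holds because $C_j\in\catb_{C_{j-1}}(R)$, which by Remark~\ref{disc9801} is exactly the condition $\eq{C_j}\tri\eq{C_{j-1}}$ — wait, I need $C_{j-1}$-reflexivity of $C_j$, i.e. $\eq{C_{j-1}}\tri\eq{C_j}$, which is given by the chain; equivalently $C_j\in\catb_{C_{j-1}}(R)$ via Remark~\ref{disc9801}. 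Actually, the cleanest citation is Remark~\ref{disc0201}.5 applied to the two-term chain $\eq{C_j}\trineq\eq{C_{j-1}}$ (or rather its relevant piece), which gives $C_j\simeq C_{j-1}\lotimes_R\rhom_R(C_{j-1},C_j)$ directly.

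**Assembling it.** Given the displayed identity $C_j\simeq C_{j-1}\lotimes_R\rhom_R(C_{j-1},C_j)=C_{j-1}\lotimes_R B_j$ from Remark~\ref{disc0201}.5, the lemma reduces to applying $\rhom_R(C_{i-1},-)$ and commuting it past the tensor factor $B_j$ via tensor-evaluation. So the plan is: (1) extract the needed reflexivities from Assumption~\ref{ass}; (2) invoke Remark~\ref{disc0201}.5 for $C_j\simeq C_{j-1}\lotimes_R B_j$; (3) apply $\rhom_R(C_{i-1},-)$; (4) use tensor-evaluation to pull $B_j$ out of the $\rhom$. The only subtle point — the likely obstacle — is verifying the hypotheses for the tensor-evaluation isomorphism $\rhom_R(C_{i-1},C_{j-1})\lotimes_R B_j\simeq\rhom_R(C_{i-1},C_{j-1}\lotimes_R B_j)$: one wants $C_{i-1}$ homologically finite (true), $B_j$ homologically bounded (true, it is semidualizing), and some finiteness/boundedness on $\rhom_R(C_{i-1},C_{j-1})$ coming from $C_{i-1}$ being $C_{j-1}$-reflexive. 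This is exactly the setting in which tensor-evaluation is known to hold, so I would cite the appropriate result (e.g. from~\cite{christensen:scatac} or~\cite{avramov:rhafgd}) rather than reprove it. Everything else is formal.
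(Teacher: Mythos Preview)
Your approach has a genuine gap at the tensor-evaluation step. The isomorphism
\[
\rhom_R(C_{i-1},C_{j-1})\lotimes_R B_j \;\simeq\; \rhom_R(C_{i-1},C_{j-1}\lotimes_R B_j)
\]
does \emph{not} follow from the standard tensor-evaluation results. Those require $C_{i-1}$ to have finite projective dimension, or $B_j$ to have finite flat dimension, or $C_{j-1}$ to have finite injective dimension --- none of which hold for general semidualizing complexes. The condition that $C_{i-1}$ be $C_{j-1}$-reflexive (which, incidentally, for $j>i+1$ already needs transitivity of $\tri$, an assumption Lemma~\ref{lem9901} does not make) only controls the homology of $\rhom_R(C_{i-1},C_{j-1})$; it does not supply a finiteness condition on $C_{i-1}$ or $B_j$ of the sort tensor-evaluation needs. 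There is no result in \cite{christensen:scatac} or \cite{avramov:rhafgd} that you can cite here.

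The paper sidesteps exactly this problem by avoiding tensor-evaluation altogether. It first establishes
\[
\rhom_R\bigl(B_j,\rhom_R(C_{i-1},C_j)\bigr)\simeq\rhom_R(C_{i-1},C_{j-1})
\]
using only Hom-tensor adjointness (which is always valid in $\catd(R)$) and the single reflexivity $\rhom_R(\rhom_R(C_{j-1},C_j),C_j)\simeq C_{j-1}$. Since all three complexes appearing are semidualizing, \cite[(1.3)]{frankild:rbsc} then places $\rhom_R(C_{i-1},C_j)$ in $\catb_{B_j}(R)$, and the Bass-class evaluation morphism --- which \emph{is} the correct relative substitute for tensor-evaluation here --- gives $\rhom_R(C_{i-1},C_j)\simeq B_j\lotimes_R\rhom_R(C_{i-1},C_{j-1})$. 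That substitution of a Bass-class argument for raw tensor-evaluation is the actual content of the proof, not something you can hand off as a citation.
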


\begin{proof}
Consider the following sequence of isomorphisms:
\begin{align*}
\rhom_R(B_j,\rhom_R(C_{i-1},C_{j}))\!
&\simeq\rhom_R(\rhom_R(C_{j-1},C_{j}),\!\rhom_R(C_{i-1},C_{j}))\\
&\simeq\rhom_R(\rhom_R(C_{j-1},C_{j})\lotimes_RC_{i-1},C_{j}) \\
&\simeq\rhom_R(C_{i-1},\!\rhom_R(\rhom_R(C_{j-1},C_{j}),C_{j})) \\
&\simeq\rhom_R(C_{i-1},C_{j-1}).
\end{align*}
The first two isomorphisms are from Hom-tensor adjointness
and the commutativity of tensor product.
The third isomorphism is from the condition $\eq{C_{j}}\tri\eq{C_{j-1}}$.
Since the $R$-complexes $B_j$, 
$\rhom_R(C_{i-1},C_{j})$ and $\rhom_R(C_{i-1},C_{j-1})$ are semidualizing,
it follows from~\cite[(1.3)]{frankild:rbsc} that 
$\eq{\rhom_R(C_{i-1},C_{j})}\tri\eq{B_j}$
so the first isomorphism in the next sequence is from
Remark~\ref{disc0201}.5:
\begin{align*}
\rhom_R(C_{i-1},C_{j})
&\simeq B_j\lotimes_R
\rhom_R(B_j,\rhom_R(C_{i-1},C_{j}))\\
&\simeq B_j\lotimes_R
\rhom_R(C_{i-1},C_{j-1})
\end{align*}
The second isomorphism is from the first displayed sequence in this proof.
\end{proof}

\begin{lem} \label{lem9902}
Under  the hypotheses of Assumption~\ref{ass},
if $1\leq i< i+p\leq n$, then
$$B_{\{i,i+1,\ldots,i+p\}}\simeq \rhom_R(C_{i-1},C_{i+p}).$$
\end{lem}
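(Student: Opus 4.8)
The plan is to prove Lemma~\ref{lem9902} by induction on $p$, using Lemma~\ref{lem9901} as the engine for the inductive step. The base case $p=1$ is exactly the definition: $B_{\{i,i+1\}}=B_i\lotimes_R B_{i+1}=\rhom_R(C_{i-1},C_i)\lotimes_R\rhom_R(C_i,C_{i+1})$, and this is $\rhom_R(C_{i-1},C_{i+1})$ by Lemma~\ref{lem9901} applied with the pair $(i,i+1)$ (note $\rhom_R(C_{i-1},C_i)=\rhom_R(C_{(i-1)},C_{(i+1)-1})$, so the lemma gives $\rhom_R(C_{i-1},C_i)\lotimes_R B_{i+1}\simeq\rhom_R(C_{i-1},C_{i+1})$). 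Actually, strictly speaking, even $p=0$ is the hypothesis $B_{\{i\}}=B_i=\rhom_R(C_{i-1},C_i)$, which holds by definition, so I may as well start the induction there.

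For the inductive step, suppose $p\geq 1$ and that $B_{\{i,i+1,\ldots,i+p-1\}}\simeq\rhom_R(C_{i-1},C_{i+p-1})$. By the definition of $B_{\ui}$ for a contiguous block, we have
\begin{align*}
B_{\{i,i+1,\ldots,i+p\}}
&\simeq B_{\{i,i+1,\ldots,i+p-1\}}\lotimes_R B_{i+p}\\
&\simeq \rhom_R(C_{i-1},C_{i+p-1})\lotimes_R B_{i+p}\\
&\simeq \rhom_R(C_{i-1},C_{i+p}),
\end{align*}
where the first isomorphism is the associativity of $\lotimes_R$ together with the definition of $B_{\ui}$, the second is the induction hypothesis, and the third is Lemma~\ref{lem9901} applied to the indices $i$ and $j=i+p$ (legitimate since $1\leq i<i+p\leq n$, and $\rhom_R(C_{i-1},C_{i+p-1})=\rhom_R(C_{(i-1)},C_{(i+p)-1})$ while $B_{i+p}=B_j$). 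This closes the induction and establishes the claim.

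The only point requiring a little care — and the place I expect the main friction — is the bookkeeping of indices: Lemma~\ref{lem9901} is phrased in terms of $\rhom_R(C_{i-1},C_{j-1})\lotimes_R B_j\simeq\rhom_R(C_{i-1},C_j)$, so one must consistently shift the ``target'' subscript by one when matching it against the statement of Lemma~\ref{lem9902}, which is indexed by the actual endpoints $i-1$ and $i+p$ of the block. Once one fixes the translation ``the block $\{i,\ldots,i+p\}$ corresponds to $\rhom_R(C_{i-1},C_{i+p})$'', everything lines up, and no homological input beyond Lemma~\ref{lem9901} (hence beyond Hom-tensor adjointness, commutativity of $\lotimes_R$, and Remark~\ref{disc0201}.5) is needed. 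There is no genuine obstacle here; the lemma is a routine consequence of its predecessor.
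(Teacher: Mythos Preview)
Your proof is correct and follows essentially the same approach as the paper: induction on $p$, with Lemma~\ref{lem9901} supplying the inductive step via $\rhom_R(C_{i-1},C_{i+p-1})\lotimes_R B_{i+p}\simeq\rhom_R(C_{i-1},C_{i+p})$. The paper takes $p=1$ as the base case (your observation that $p=0$ already works by definition is fine too, though the statement itself only concerns $p\geq 1$).
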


\begin{proof}
We proceed by induction on $p$.
The base case is when $p=1$.
Setting $j=i+1$ in Lemma~\ref{lem9901}
yields the first isomorphism in the next sequence
\begin{align*}
\rhom_R(C_{i-1},C_{i+1})
&\simeq B_{i+1}\lotimes_R
\rhom_R(C_{i-1},C_{i})
\simeq B_{i+1}\lotimes_RB_{i}
\simeq B_{\{i,i+1\}}.
\end{align*}
The  remaining isormorphisms are by definition.

For the inductive step, assume that
$B_{\{i,i+1,\ldots,i+p-1\}}\simeq \rhom_R(C_{i-1},C_{i+p-1})$.
This assmption explains the second isomorphism in the next sequence
\begin{align*}
B_{\{i,i+1,\ldots,i+p\}}
&\simeq B_{i}\lotimes_R\cdots\lotimes_RB_{i+p-1}\lotimes_RB_{i+p}\\
&\simeq \rhom_R(C_{i-1},C_{i+p-1})\lotimes_RB_{i+p}\\
&\simeq \rhom_R(C_{i-1},C_{i+p}).
\end{align*}
The first and third isomorphisms are by definition,
and the fourth isomorphism is from Lemma~\ref{lem9901}.
\end{proof}

\begin{lem} \label{lem9903}
Assume that 
$\catss_{C_{i}}(R)\subseteq\catss_{C_{j-1}}(R)$.
Under  the hypotheses of Assumption~\ref{ass},
if $1\leq i<j-1\leq n-1$, then
$$B_{\{i,j\}}\simeq 
\rhom_R(\rhom_R(B_i,C_{j-1}),C_{j})).$$
(In the notation from Remark~\ref{d1101}, this reads
$B_{\{i,j\}}
\simeq B_{i}^{\dagger_{C_{j-1}}\dagger_{C_{j}}}\simeq 
C_{i-1}^{\dagger_{C_{i}}\dagger_{C_{j-1}}\dagger_{C_{j}}}$.)
\end{lem}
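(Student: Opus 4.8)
The plan is to exhibit $B_i$ as the smallest term of a short chain lying below $C_j$, extract a Gerko-type tensor decomposition of $C_j$ from it, and then convert that decomposition into the asserted biduality isomorphism using the Auslander class. To begin, I would obtain the reflexivity $\eq{C_{j-1}}\tri\eq{B_i}$. Since $\eq{C_i}\trineq\eq{C_{i-1}}$ is part of the chain in Assumption~\ref{ass}, the complex $C_{i-1}$ is $C_i$-reflexive, so Remark~\ref{disc0201}.1 shows $B_i=\rhom_R(C_{i-1},C_i)$ is semidualizing and $C_i$-reflexive, i.e.\ $\eq{B_i}\in\catss_{C_i}(R)$. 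The standing hypothesis $\catss_{C_i}(R)\subseteq\catss_{C_{j-1}}(R)$ then gives $\eq{B_i}\in\catss_{C_{j-1}}(R)$, that is, $B_i$ is $C_{j-1}$-reflexive. Combined with the relation $\eq{C_j}\tri\eq{C_{j-1}}$ coming from Assumption~\ref{ass}, this produces a chain $\eq{C_j}\tri\eq{C_{j-1}}\tri\eq{B_i}$ in $\catss(R)$.

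Next I would apply Remark~\ref{disc0201}.5 to this chain, with $B_i$ playing the role of the minimal element. Set $D=\rhom_R(B_i,C_{j-1})$, which is semidualizing by Remark~\ref{disc0201}.1 (as $B_i$ is $C_{j-1}$-reflexive). Remark~\ref{disc0201}.5 then yields
$$C_j\simeq B_i\lotimes_R\rhom_R(B_i,C_{j-1})\lotimes_R\rhom_R(C_{j-1},C_j)=B_i\lotimes_R D\lotimes_R B_j,$$
and, by commutativity and associativity of $\lotimes_R$, this reads $(B_i\lotimes_R B_j)\lotimes_R D\simeq C_j$. In particular $(B_i\lotimes_R B_j)\lotimes_R D$ is a semidualizing complex, so Remark~\ref{disc9801} gives $B_i\lotimes_R B_j\in\cata_D(R)$.

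Finally I would feed this into the Auslander-class condition of Definition~\ref{notn99}: the natural morphism $\gamma^D_{B_i\lotimes_R B_j}\colon B_i\lotimes_R B_j\to\rhom_R(D,D\lotimes_R(B_i\lotimes_R B_j))$ is an isomorphism in $\D(R)$, and substituting the isomorphism $D\lotimes_R(B_i\lotimes_R B_j)\simeq C_j$ from the previous step gives
$$B_{\{i,j\}}=B_i\lotimes_R B_j\simeq\rhom_R(D,C_j)=\rhom_R(\rhom_R(B_i,C_{j-1}),C_j),$$
which is the desired isomorphism; the reformulation in the notation of Remark~\ref{d1101} is then immediate from $B_i=\rhom_R(C_{i-1},C_i)$.

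I expect the argument to be largely mechanical once the chain $\eq{C_j}\tri\eq{C_{j-1}}\tri\eq{B_i}$ is in place, so the one genuinely delicate point is its construction: this is exactly where the hypothesis $\catss_{C_i}(R)\subseteq\catss_{C_{j-1}}(R)$ is used, namely to upgrade the $C_i$-reflexivity of $B_i$ to $C_{j-1}$-reflexivity. A secondary, more cosmetic point is that Remark~\ref{disc0201}.5 is phrased for strict chains, so to be scrupulous one should dispose of the degenerate cases $\eq{C_j}=\eq{C_{j-1}}$ and $\eq{C_{j-1}}=\eq{B_i}$ separately, but each of these collapses the claim to reflexivity relations already at hand.
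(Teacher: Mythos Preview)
Your approach is sound and takes a route dual to the paper's. The paper establishes
\[
B_i\;\simeq\;\rhom_R\bigl(B_j,\rhom_R(\rhom_R(B_i,C_{j-1}),C_j)\bigr)
\]
directly by Hom--tensor adjointness (using the $C_{j-1}$-reflexivity of $B_i$ and the $C_j$-reflexivity of $C_{j-1}$), and then finishes through the Bass class of $B_j$ exactly as in Lemma~\ref{lem9901}. You instead feed the chain $\eq{C_j}\tri\eq{C_{j-1}}\tri\eq{B_i}$ into Gerko's tensor decomposition (Remark~\ref{disc0201}.5) to obtain $C_j\simeq B_i\lotimes_R D\lotimes_R B_j$ with $D=\rhom_R(B_i,C_{j-1})$, and then pass through the Auslander class of $D$. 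Your route makes the origin of each tensor factor transparent and uses Remark~\ref{disc0201}.5 up front; the paper's argument stays entirely on the $\rhom$ side and never forms $B_i\lotimes_R B_j$ as an intermediate object.

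One step deserves care. You invoke Remark~\ref{disc9801} to deduce $B_i\lotimes_R B_j\in\cata_D(R)$ from the fact that $D\lotimes_R(B_i\lotimes_R B_j)\simeq C_j$ is semidualizing, but that remark is stated for a pair of \emph{semidualizing} complexes, and at this point you have not shown that $B_i\lotimes_R B_j$ is semidualizing---or even homologically bounded. The conclusion is correct, but it needs the stronger form of \cite[(4.8)]{frankild:rbsc} (which applies once one knows the tensor product is semidualizing, without assuming both inputs are) rather than the paraphrase in Remark~\ref{disc9801}; alternatively, insert a brief boundedness argument first. This is the same flavor of dependence the paper's own finish has on \cite[(1.3)]{frankild:rbsc}, so the two arguments are on equal footing here.
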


\begin{proof}
The proof is similar to that for Lemma~\ref{lem9901}. The main point is the following
sequence of isomorphisms wherein
the last two isomorphisms are from Hom-tensor adjointness
and the commutativity of tensor product:
\begin{align*}
B_i
&\simeq
\rhom_R(\rhom_R(B_i,C_{j-1}),C_{j-1})\\
&\simeq
\rhom_R(\rhom_R(B_i,C_{j-1}),\rhom_R(B_j,C_{j}))\\
&\simeq
\rhom_R(B_j\lotimes_R\rhom_R(B_i,C_{j-1}),C_{j})\\
&\simeq
\rhom_R(B_j,\rhom_R(\rhom_R(B_i,C_{j-1}),C_{j})).
\end{align*}
The first isomorphism is from the chain
$\eq{C_{j-1}}\tri\eq{C_i}\tri\eq{\rhom_R(C_{i-1},C_i)}=\eq{B_i}$, 
using the containment 
$\catss_{C_{i}}(R)\subseteq\catss_{C_{j-1}}(R)$;
see Remark~\ref{disc0201}.1.
The second isomorphism is from the condition $\eq{C_{j}}\tri\eq{C_{j-1}}$
and the definition of $B_j$. 
\end{proof}

\begin{lem} \label{lem9904}
Assume that 
$\catss_{C_1}(R)\subseteq\cdots \subseteq\catss_{C_{n-1}}(R) \subseteq\catss_{C_n}(R)$.
Under  the hypotheses of Assumption~\ref{ass},
each of the $R$-complexes $B_{\ui}$ is semidualizing.
\end{lem}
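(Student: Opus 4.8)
The plan is to prove by induction on $j = |\ui|$ that $B_{\ui}$ is semidualizing, reducing everything to the two-element case $B_{\{i,j\}}$ handled by Lemmas~\ref{lem9902} and~\ref{lem9903}. The base cases $j = 0$ and $j = 1$ are immediate: $B_{\emptyset} = C_0$ is semidualizing by hypothesis, and $B_{\{i\}} = B_i = \rhom_R(C_{i-1}, C_i)$ is semidualizing by Remark~\ref{disc0201}.1, since $\eq{C_i} \tri \eq{C_{i-1}}$ makes $C_{i-1}$ a $C_i$-reflexive semidualizing complex. For $j = 2$, the key observation is that $B_{\{i,j\}}$ can always be realized as $\rhom_R(-, C_k)$ of a semidualizing complex that is $C_k$-reflexive: if the two indices are consecutive in the chain we invoke Lemma~\ref{lem9902} (which identifies $B_{\{i,i+1,\dots,i+p\}}$ with $\rhom_R(C_{i-1}, C_{i+p})$, semidualizing since $C_{i-1}$ is $C_{i+p}$-reflexive by transitivity of the containments), and if they are not consecutive we use Lemma~\ref{lem9903} together with Remark~\ref{disc0201}.1.

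For the inductive step, suppose $\ui = \{i_1 < i_2 < \cdots < i_j\}$ with $j \geq 3$. The idea is to peel off the largest index: write $\us = \{i_1, \dots, i_{j-1}\}$, so $B_{\ui} \simeq B_{\us} \lotimes_R B_{i_j}$, and we know $B_{\us}$ is semidualizing by induction. It now suffices to show that $B_{\us} \in \cata_{B_{i_j}}(R)$, because then $B_{\us} \lotimes_R B_{i_j}$ is semidualizing by Remark~\ref{disc9801}. Equivalently (Remark~\ref{disc9801} again), we must show $B_{i_j} \in \catb_{B_{\us}}(R)$, i.e.\ $\eq{B_{\us}} \tri \eq{B_{i_j}}$. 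To get at this I would use the explicit identification of $B_{\us}$ with an iterated dual of $C_{i_1 - 1}$ along the chain (the content of Lemmas~\ref{lem9902}–\ref{lem9903}, extended inductively): up to the containment hypotheses, $B_{\us}$ is $\rhom_R(C_{i_1-1}, C_{i_{j-1}})$ when the indices $i_1, \dots, i_{j-1}$ happen to be consecutive, and more generally an alternating composite of $\dagger_{C_{i_k}}$'s. Then $\rhom_R(B_{\us}, C_{i_j})$ and the biduality can be analyzed via Hom-tensor adjointness exactly as in Lemma~\ref{lem9901}, using at each stage that $\eq{C_{i_j}} \tri \eq{C_{i_k}}$ for all $k < j$ (which holds because $\catss_{C_{i_k}}(R) \subseteq \catss_{C_{i_j}}(R)$ by the hypothesized chain of containments) to slide the dualizing object up to $C_{i_j}$.

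The main obstacle I anticipate is organizing this cleanly: rather than chasing a long adjointness computation for an arbitrary subset $\ui$, the efficient route is probably to first prove an auxiliary statement generalizing Lemma~\ref{lem9903} — namely that for \emph{any} $\ui$ (under the full chain of containments), $B_{\ui} \simeq C_{i_1-1}^{\dagger_{C_{i_1}} \dagger_{C_{i_2}} \cdots \dagger_{C_{i_j}}}$, the iterated dual read left to right — and deduce semidualizingness from that normal form by repeatedly applying Remark~\ref{disc0201}.1. Establishing that normal form is itself an induction, where the inductive step is essentially Lemma~\ref{lem9901}/\ref{lem9903} applied to move $B_{i_j}$ past the tensor factors; the containment hypotheses are exactly what is needed to guarantee each intermediate complex lies in the appropriate Bass class so the adjointness manipulations are legal. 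Once the normal form is in hand, semidualizingness is formal: each application of $\dagger_{C_{i_k}}$ to a $C_{i_k}$-reflexive semidualizing complex yields a semidualizing complex, and $C_{i_1 - 1}$ is $C_{i_k}$-reflexive for every $k$ by the containment chain.
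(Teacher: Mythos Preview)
Your proposal contains two genuine gaps, one in each route you sketch.

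First, the step ``Equivalently (Remark~\ref{disc9801} again), we must show $B_{i_j} \in \catb_{B_{\us}}(R)$'' is not justified. Remark~\ref{disc9801} gives $B \in \cata_C(R) \Leftrightarrow B \lotimes_R C$ is semidualizing, and separately $B \in \catb_C(R) \Leftrightarrow \eq{B} \tri \eq{C}$; it does not relate $\cata_{B_{i_j}}$-membership of $B_{\us}$ to $\catb_{B_{\us}}$-membership of $B_{i_j}$. These conditions genuinely diverge: the first is exactly the statement that $B_{\us} \lotimes_R B_{i_j}$ is semidualizing (what you are trying to prove), while the second asks whether $\eq{B_{i_j}} \tri \eq{B_{\us}}$, and there is no general implication between them. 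So this route is circular.

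Second, the claimed normal form $B_{\ui} \simeq C_{i_1-1}^{\dagger_{C_{i_1}} \dagger_{C_{i_2}} \cdots \dagger_{C_{i_j}}}$ is false. Take $C_0 \simeq R$ and $\ui = \{1,2\}$: Lemma~\ref{lem9902} gives $B_{\{1,2\}} \simeq \rhom_R(C_0, C_2) \simeq C_2$, whereas your formula gives $R^{\dagger_{C_1}\dagger_{C_2}} = \rhom_R(C_1, C_2) = B_2$, and $\eq{C_2} \neq \eq{B_2}$ unless $C_1 \sim R$. The tensor-product complexes $B_{\ui}$ and the iterated duals $C_{\ui}$ of Remark~\ref{d1101} do form the same \emph{set} of classes (Theorem~\ref{thm9901}), but the bijection is not the identity on subscripts, so you cannot read semidualizingness of $B_{\ui}$ off the iterated-dual description indexed by the same $\ui$.

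The paper's argument avoids both issues by inducting on the chain length $n$ rather than on $|\ui|$. It absorbs the first two tensor factors $B_{i_1} \lotimes_R B_{i_2}$ into a single $\rhom_R(C', C'')$ via Lemma~\ref{lem9902} (when $i_2=i_1+1$) or Lemma~\ref{lem9903} (when $i_2>i_1+1$); this new complex plays the role of a $B$-factor for a strictly shorter chain in $\catss(R)$ obtained by deleting or replacing one of the $C_k$, and the induction hypothesis then applies to that shorter chain. No global normal form is needed.
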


\begin{proof}
We proceed  by induction on $n$, the length of the chain in $\catss (R)$.
The base cases $n=0,1$ are routine.
Indeed, when $n=0$, we have only one $B_{\ui}$, namely $B_{\emptyset}=C_0$.
When $n=1$, we have only two $B_{\ui}$, namely
$B_{\{1\}}=B_1$ and $B_{\emptyset}=C_0$.

Now, assume that $n\geq 2$ and that the result holds for chains of length $n-1$.
Also, assume without loss of generality that $\ui\neq\emptyset$.
We have two cases.

Case 1:
If $i_2=i_1+1$, then the first isomorphism in the next sequence is by definition,
and the second isomorphism is from Lemma~\ref{lem9902}:
\begin{align*}
B_{\ui}
&\simeq B_{i_1}\lotimes_R B_{i_1+1}\lotimes_R B_{i_3}\lotimes_R\cdots\lotimes_R B_{i_j}\\
&\simeq \rhom_R(C_{i_1-1},C_{i_1+1})\lotimes_R B_{i_3}\lotimes_R\cdots\lotimes_R B_{i_j}
\end{align*}
Applying our induction hypothesis to 
the chain $\eq{C_n}\trineq \cdots \trineq \eq{C_{i_1+1}} \trineq \eq{C_{i_1-1}} \trineq\cdots \trineq 
\eq{C_0}$,
we conclude that the tensor product in the final line of this sequence is semidualizing.
Hence $B_{\ui}$ is semidualizing in this case.

Case 2: If $i_2>i_1+1$,
then the first isomorphism in the next sequence is by definition,
and the second isomorphism is from Lemma~\ref{lem9903}:
\begin{align*}
B_{\ui}
&\simeq B_{i_1}\lotimes_R B_{i_2}\lotimes_R B_{i_3}\lotimes_R\cdots\lotimes_R B_{i_j}\\
&\simeq \rhom_R(\rhom_R(\rhom_R(C_{i_1-1},C_{i_1}),C_{i_2-1}),C_{i_2}))
\lotimes_R B_{i_3}\lotimes_R\cdots\lotimes_R B_{i_j}
\end{align*}
Applying our induction hypothesis to 
the chain 
$$\eq{C_n}\trineq \cdots \trineq \eq{C_{i_2}} \trineq 
\eq{\rhom_R(\rhom_R(C_{i_1-1},C_{i_1}),C_{i_2-1})}$$
we conclude again that $B_{\ui}$ is semidualizing in this case.
\end{proof}

\begin{lem} \label{lem9905}
Assume that 
$\catss_{C_1}(R)\subseteq\cdots \subseteq\catss_{C_{n-1}}(R) \subseteq\catss_{C_n}(R)$.
Under  the hypotheses of Assumption~\ref{ass},
if $\us=\{s_1,\ldots,s_1\}$ is a sequence of integers such that $1\leq s_1<\cdots<s_t\leq n$
and
$\ui\supseteq \us$, then $\eq{B_{\ui}}\tri \eq{B_{\us}}$ and
$\rhom_R(B_{\us},B_{\ui})\simeq B_{\ui\smallsetminus\us}$.
\end{lem}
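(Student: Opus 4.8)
The plan is to reduce everything to the single-element case and then iterate. First I would prove the case $\us = \{s\}$ is a singleton, showing $\eq{B_{\ui}}\tri\eq{B_s}$ and $\rhom_R(B_s,B_{\ui})\simeq B_{\ui\smallsetminus\{s\}}$. For this, observe that $B_{\ui}\simeq B_{\{s\}}\lotimes_R B_{\ui\smallsetminus\{s\}}$ up to reordering the tensor factors (commutativity of $\lotimes_R$), and by Lemma~\ref{lem9904} all three complexes $B_s$, $B_{\ui}$, $B_{\ui\smallsetminus\{s\}}$ are semidualizing. Since $B_{\ui}\simeq B_s\lotimes_R B_{\ui\smallsetminus\{s\}}$ with $B_s$ semidualizing, Remark~\ref{disc9801} gives $B_{\ui\smallsetminus\{s\}}\in\cata_{B_s}(R)$, hence $B_{\ui\smallsetminus\{s\}}\lotimes_R B_s$ is semidualizing (which we already knew) and, more importantly, the Auslander-class condition says $\gamma^{B_s}_{B_{\ui\smallsetminus\{s\}}}\colon B_{\ui\smallsetminus\{s\}}\xra{\simeq}\rhom_R(B_s, B_s\lotimes_R B_{\ui\smallsetminus\{s\}})\simeq \rhom_R(B_s,B_{\ui})$ is an isomorphism. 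This gives $\rhom_R(B_s,B_{\ui})\simeq B_{\ui\smallsetminus\{s\}}$. For the reflexivity relation $\eq{B_{\ui}}\tri\eq{B_s}$: by Remark~\ref{disc9801} it suffices to check $B_s\in\catb_{B_{\ui}}(R)$, or equivalently—via Remark~\ref{disc0201}.1 and the involution $\Phi$—one can instead verify that $\eq{B_{\ui}}\tri\eq{B_{\ui\smallsetminus\{s\}}}$ type relations hold; the cleanest route is to note $B_{\ui}\lotimes_R(\text{something})$ recovers $B_s$ appropriately, i.e. $B_s\simeq\rhom_R(B_{\ui\smallsetminus\{s\}}, B_{\ui})$ by the same adjointness/Auslander-class argument, so $B_s\in\catb_{B_{\ui}}(R)$ by~\cite[(1.3)]{frankild:rbsc} once we know $B_{\ui\smallsetminus\{s\}}\in\cata_{B_{\ui}}(R)$.

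Next I would induct on $t=\card{\us}$. The base case $t=1$ is the singleton case above. For the inductive step write $\us = \us'\cup\{s_t\}$ with $\us' = \{s_1,\ldots,s_{t-1}\}$ and $\ui\supseteq\us$. By the singleton case applied to $s_t\in\ui$, we get $\rhom_R(B_{s_t},B_{\ui})\simeq B_{\ui\smallsetminus\{s_t\}}$ and $\eq{B_{\ui}}\tri\eq{B_{s_t}}$. Now apply the inductive hypothesis to $\us'\subseteq\ui\smallsetminus\{s_t\}$: this yields $\rhom_R(B_{\us'},B_{\ui\smallsetminus\{s_t\}})\simeq B_{(\ui\smallsetminus\{s_t\})\smallsetminus\us'} = B_{\ui\smallsetminus\us}$ and $\eq{B_{\ui\smallsetminus\{s_t\}}}\tri\eq{B_{\us'}}$. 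To combine these, I would use Hom-tensor adjointness: since $B_{\us}\simeq B_{\us'}\lotimes_R B_{s_t}$,
\begin{align*}
\rhom_R(B_{\us},B_{\ui})
&\simeq\rhom_R(B_{\us'}\lotimes_R B_{s_t},B_{\ui})\\
&\simeq\rhom_R(B_{\us'},\rhom_R(B_{s_t},B_{\ui}))\\
&\simeq\rhom_R(B_{\us'},B_{\ui\smallsetminus\{s_t\}})\\
&\simeq B_{\ui\smallsetminus\us}.
\end{align*}
For the reflexivity claim $\eq{B_{\ui}}\tri\eq{B_{\us}}$, i.e. $B_{\us}$ is $B_{\ui}$-reflexive: we have just computed $\rhom_R(B_{\us},B_{\ui})\simeq B_{\ui\smallsetminus\us}$, which is homologically finite (it is semidualizing by Lemma~\ref{lem9904}), so it remains to check the biduality map $\delta^{B_{\ui}}_{B_{\us}}$ is an isomorphism. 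Applying $\rhom_R(-,B_{\ui})$ again and using adjointness symmetrically, $\rhom_R(\rhom_R(B_{\us},B_{\ui}),B_{\ui})\simeq\rhom_R(B_{\ui\smallsetminus\us},B_{\ui})$; writing $\ui = \us\cup(\ui\smallsetminus\us)$ so that $B_{\ui}\simeq B_{\us}\lotimes_R B_{\ui\smallsetminus\us}$, this last complex is $\rhom_R(B_{\ui\smallsetminus\us}, B_{\us}\lotimes_R B_{\ui\smallsetminus\us})\simeq B_{\us}$ by the Auslander-class isomorphism $\gamma$ (using $B_{\us}\in\cata_{B_{\ui\smallsetminus\us}}(R)$, which follows from $B_{\us}\lotimes_R B_{\ui\smallsetminus\us}\simeq B_{\ui}$ being semidualizing via Remark~\ref{disc9801}). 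One must then check that this chain of isomorphisms is compatible with the natural biduality morphism, not merely an abstract isomorphism.

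The main obstacle I anticipate is precisely that last point: verifying that the natural biduality morphism $\delta^{B_{\ui}}_{B_{\us}}$ (and the natural map $\gamma^{B_s}_{B_{\ui\smallsetminus\{s\}}}$ in the singleton case) is the isomorphism produced by the formal manipulations, rather than just exhibiting some isomorphism between the source and target. The Auslander/Bass class formalism of Remark~\ref{disc9801} and~\cite[(1.3), (4.8)]{frankild:rbsc} is exactly designed to handle this—membership in $\cata_C(R)$ or $\catb_C(R)$ is precisely the statement that the relevant \emph{natural} morphism is an isomorphism—so the real work is organizing the bookkeeping so that each invocation is of a \emph{natural} map. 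A secondary, more routine obstacle is confirming associativity/commutativity of $\lotimes_R$ interacts correctly with the indexing by subsets $\ui$ (so that $B_{\us}\lotimes_R B_{\ui\smallsetminus\us}\simeq B_{\ui}$ on the nose), which is immediate from Assumption~\ref{ass} and the symmetric monoidal structure on $\D(R)$.
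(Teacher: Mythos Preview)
Your induction is unnecessary and obscures what is really going on. The paper's proof is two lines: by Lemma~\ref{lem9904} the complexes $B_{\ui}$, $B_{\us}$, and $B_{\ui\smallsetminus\us}$ are all semidualizing, and by commutativity of $\lotimes_R$ one has $B_{\ui}\simeq B_{\us}\lotimes_R B_{\ui\smallsetminus\us}$; the conclusion then follows immediately from Gerko's result \cite[(3.5)]{gerko:sdc}, which says precisely that if $A$, $B$, and $A\lotimes_R B$ are all semidualizing then $\eq{A\lotimes_R B}\tri\eq A$ and $\rhom_R(A,A\lotimes_R B)\simeq B$. No induction on $\card{\us}$ is needed because the full factorization is already available.

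There is also a genuine error in your singleton case. You claim that $B_{\ui\smallsetminus\{s\}}\in\cata_{B_{\ui}}(R)$, but by Remark~\ref{disc9801} this would require $B_{\ui\smallsetminus\{s\}}\lotimes_R B_{\ui}\simeq B_{\ui\smallsetminus\{s\}}\lotimes_R B_{\ui\smallsetminus\{s\}}\lotimes_R B_s$ to be semidualizing, and Remark~\ref{disc0201}.4 would then force $B_{\ui\smallsetminus\{s\}}\sim R$, which fails whenever $\ui\neq\{s\}$. What you actually need (and what your own final paragraph uses correctly) is $B_s\in\cata_{B_{\ui\smallsetminus\{s\}}}(R)$, which does hold. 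Beyond this slip, the substantive obstacle you identify at the end---that the abstract isomorphism $\rhom_R(\rhom_R(B_{\us},B_{\ui}),B_{\ui})\simeq B_{\us}$ must be shown to coincide with the natural biduality morphism $\delta^{B_{\ui}}_{B_{\us}}$---is exactly the content of \cite[(3.5)]{gerko:sdc}. Your plan does not resolve it; it merely names it. So either cite Gerko as the paper does, or supply the compatibility diagram showing that $\delta$ factors through the two $\gamma$-isomorphisms coming from the Auslander-class conditions on $B_{\us}$ and $B_{\ui\smallsetminus\us}$.
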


\begin{proof}
By Lemma~\ref{lem9904}, the $R$-complexes
$B_{\ui}$, $B_{\us}$ and $B_{\ui\smallsetminus\us}$ are  semidualizing.
By definition, we have $B_{\us}\simeq B_{\ui}\lotimes_RB_{\ui\smallsetminus\us}$,
so the result follows from~\cite[(3.5)]{gerko:sdc}.
\end{proof}

The following theorem compares the complexes constructed in this section to those from the previous section.

\begin{thm} \label{thm9901}
Assume that 
$\catss_{C_1}(R)\subseteq\cdots \subseteq\catss_{C_{n-1}}(R) \subseteq\catss_{C_n}(R)$.
Under  the hypotheses of Assumption~\ref{ass},
if $C_0\simeq R$, then
the  $R$-complexes $B_{\ui}$ 
are precisely the $2^n$ complexes constructed in
Theorem~\ref{thm0402}. 
\end{thm}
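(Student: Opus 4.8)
The goal is to identify the $2^n$ semidualizing complexes $C_{\ui}$ from Remark~\ref{d1101} with the $2^n$ complexes $B_{\ui}$ from Assumption~\ref{ass}, in the case $C_0\simeq R$. Since both families are indexed by the same set of sequences $\ui=\{i_1,\ldots,i_j\}$ with $1\leq i_1<\cdots<i_j\leq n$, it suffices to prove $B_{\ui}\simeq C_{\ui}$ for every such $\ui$ (up to shift); once this is done, the assertion follows immediately from the description in Remark~\ref{d1101}. So the plan is to establish the single isomorphism $B_{\ui}\simeq C_0^{\dagger_{C_{i_1}}\dagger_{C_{i_2}}\cdots\dagger_{C_{i_j}}}$.

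First I would recall that by definition $B_{\ui}=B_{i_1}\lotimes_R\cdots\lotimes_R B_{i_j}$ with $B_i=\rhom_R(C_{i-1},C_i)$, and use Remark~\ref{disc0201}.5 to rewrite the right-hand side: a chain $\eq{C_{i_j}}\trineq\cdots\trineq\eq{C_{i_1}}\trineq\eq{C_0}$ (which is a subchain of our given chain, hence still a chain, and the containments of Bass classes pass to subchains under the standing hypothesis) yields
\begin{align*}
C_{i_j}&\simeq C_0\lotimes_R\rhom_R(C_0,C_{i_1})\lotimes_R\rhom_R(C_{i_1},C_{i_2})\lotimes_R\cdots\lotimes_R\rhom_R(C_{i_{j-1}},C_{i_j}).
\end{align*}
With $C_0\simeq R$ this says $C_{i_j}\simeq\rhom_R(R,C_{i_1})\lotimes_R\rhom_R(C_{i_1},C_{i_2})\lotimes_R\cdots$, so the tensor product of consecutive $\rhom$'s along the subchain $0<i_1<\cdots<i_j$ collapses to $C_{i_j}$. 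More useful is the same identity applied to chains built from the complexes $\rhom_R(C_{i-1},C_i)=B_i$ themselves, together with Lemma~\ref{lem9902}, which already tells us $B_{\{i,i+1,\ldots,i+p\}}\simeq\rhom_R(C_{i-1},C_{i+p})$ — i.e. for a \emph{consecutive} block the tensor product of the $B$'s telescopes to a single $\rhom$.

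The substance of the argument is then a bookkeeping induction, most naturally on $j=\card{\ui}$ (or, following the pattern of Lemma~\ref{lem9904}, on $n$ with a case split on whether $i_2=i_1+1$). The key recursive step: by Lemma~\ref{lem9905}, for $\us\subseteq\ui$ one has $\rhom_R(B_{\us},B_{\ui})\simeq B_{\ui\smallsetminus\us}$, so taking $\us$ to be an initial segment lets one peel off a $\dagger$. Concretely, the claim is that applying $\dagger_{C_{i_1}}\dagger_{C_{i_2}}\cdots\dagger_{C_{i_j}}$ to $C_0\simeq R$ produces exactly $B_{\ui}$; one checks the base case $j=0$ ($C_\emptyset=R=B_\emptyset$) and $j=1$ ($C_{\{i\}}=\rhom_R(R,C_i)\simeq C_i$, which is not obviously $B_i=\rhom_R(C_{i-1},C_i)$ unless $i=1$ — so care is needed here, and in fact the correct reading of Remark~\ref{d1101} is $C_{\{i\}}=C_0^{\dagger_{C_i}}$, and one must verify against Theorem~\ref{thm0402}'s construction that the $j=1$ member of that family is indeed $\rhom_R(C_0,C_1)=B_1$, so actually the indexing matches $\ui$ to the $B_{\ui}$ via the \emph{positions} used in the iterated involution, not the subscripts). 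For the inductive step, one uses Lemma~\ref{lem9901} (to merge a freshly-created consecutive pair) or Lemma~\ref{lem9903} (to handle a gap $i_2>i_1+1$), exactly mirroring the two cases in the proof of Lemma~\ref{lem9904}; indeed the cleanest route is to observe that Lemma~\ref{lem9904}'s proof \emph{already} rewrites each $B_{\ui}$ as an iterated $\rhom$ of the shape appearing in Remark~\ref{d1101}, so one just tracks those rewrites and reads off that the result is $C_{\ui}$.

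\textbf{Main obstacle.} The real difficulty is purely organizational: reconciling the two indexing conventions. In Remark~\ref{d1101} the complex $C_{\ui}$ is obtained from $C_0$ by a specific sequence of biduality-type operations, and one must check that the map "allowable sequence $\ui$ $\mapsto$ node of the induction tree in Theorem~\ref{thm0402}" agrees with the map "$\ui\mapsto B_{\ui}$" — in particular that $\Phi_{C_k}$ applied at the $k$-th stage corresponds to inserting $k$ into $\ui$. Once that dictionary is fixed, the chain of isomorphisms is forced by Lemmas~\ref{lem9901}–\ref{lem9905} and Remark~\ref{disc0201}.5, with $C_0\simeq R$ used only to kill the leading $\rhom_R(R,-)$; I would present it as the induction on $n$ with Cases 1 and 2 as in Lemma~\ref{lem9904}, noting at each node that the semidualizing complex produced there is, term by term, the $C_{\ui}$ of Remark~\ref{d1101}.
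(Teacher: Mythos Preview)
Your proposal has a genuine gap at its starting point: the claim ``it suffices to prove $B_{\ui}\simeq C_{\ui}$ for every such $\ui$'' is false. Already for $n=2$ with $C_0\simeq R$ one has $C_{\{2\}}=\rhom_R(R,C_2)\simeq C_2$, while $B_{\{2\}}=\rhom_R(C_1,C_2)$; these are not shift-isomorphic since $\eq{C_2}\neq\eq{C_1}$. (In fact $C_{\{2\}}\simeq B_{\{1,2\}}$ and $C_{\{1,2\}}\simeq B_{\{2\}}$.) You sense this and retreat to ``reconciling the two indexing conventions,'' but your proposed dictionary --- that $\Phi_{C_k}$ at stage $k$ corresponds to inserting $k$ into $\ui$ --- is also wrong: the correct effect is $\Phi_{C_k}(B_{\ui})\simeq B_{[k]\smallsetminus\ui}$, not $B_{\ui\cup\{k\}}$. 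The case-split from Lemma~\ref{lem9904} does not repair this, because that lemma's rewriting of $B_{\ui}$ does not land on $C_{\ui}$ either.

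The paper's proof sidesteps the bookkeeping entirely. It never tries to match indices; instead it argues by induction on $n$ that the \emph{set} of $B_{\ui}$'s equals the set produced in Theorem~\ref{thm0402}. By induction the $2^{n-1}$ complexes built from the shorter chain are exactly the $B_{\ui}$ with $\ui\subseteq[n-1]$. The inductive step of Theorem~\ref{thm0402} produces the remaining $2^{n-1}$ complexes by applying $\rhom_R(-,C_n)$. The hypothesis $C_0\simeq R$ together with Remark~\ref{disc0201}.5 gives $C_n\simeq B_{[n]}$, and then Lemma~\ref{lem9905} yields
\[
\rhom_R(B_{\ui},C_n)\simeq\rhom_R(B_{\ui},B_{[n]})\simeq B_{[n]\smallsetminus\ui}.
\]
Since $n\in[n]\smallsetminus\ui$, these are precisely the $B_{\us}$ with $n\in\us$, completing the set $\{B_{\ui}:\ui\subseteq[n]\}$. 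You had this lemma in hand but applied it in the wrong direction; the single displayed computation above is the whole inductive step.
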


\begin{proof}
We proceed  by induction on $n$, the length of the chain in $\catss (R)$.
The base case $n=0$ is straightforward: The only $B_{\ui}$ is
$B_{\emptyset}=C_0\simeq R$, which is the only module constructed in the proof of
Theorem~\ref{thm0402}.

For the inductive step, assume that $n\geq 1$ and that the result holds for
the chain 
$\eq{C_{n-1}}\trineq\cdots \trineq\eq{C_0}$.
Then the  $R$-complexes $B_{\ui}$ with $i_j\leq n-1$ 
are precisely the $2^{n-1}$ complexes constructed in
Theorem~\ref{thm0402} for this chain.

The remaining $2^{n-1}$ semidualizing $R$-complexes constructed in
Theorem~\ref{thm0402} (according to the proof) for
the chain 
$\eq{C_{n}}\trineq\eq{C_{n-1}} \trineq\cdots \trineq\eq{C_0}$
are then of the form $\rhom_R(B_{\ui},C_n)$. Thus, it remains to show that 
each of these complexes is of the form $B_{\us}$. This is shown in the following sequence
wherein 
we use the notation $[n]=\{1,2,\ldots,n\}$:
\begin{align*}
\rhom_R(B_{\ui},C_n)
&\simeq \rhom_R(B_{\ui},B_1\lotimes_R\cdots\lotimes_R B_n)
\simeq\rhom_R(B_{\ui},B_{[n]})
\simeq B_{[n]\smallsetminus\ui}
\end{align*}
The first isomorphism follows from Remark~\ref{disc0201}.5 using the 
assumption $C_0\simeq R$.
The second isomorphism is by definition.
The third isomorphism is from Lemma~\ref{lem9905}.
\end{proof}

The next result provides a purely combinatorial description of the reflexivity ordering on the set of 
$\eq{B_{\ui}}$.

\begin{thm} \label{thm0403}
Assume that 
$\catss_{C_1}(R)\subseteq\cdots \subseteq\catss_{C_{n-1}}(R) \subseteq\catss_{C_n}(R)$.
Under  the hypotheses of Assumption~\ref{ass},
the following conditions are equivalent:
\begin{enumerate}[\rm(i)]
\item\label{thm0403b1}
One has $B_{\ui}\tri B_{\us}$;
\item\label{thm0403b2}
One has $B_{\us}\in\catb_{B_{\ui}}(R)$; and
\item\label{thm0403b3}
One has $\ui\supseteq \us$.
\end{enumerate}
\end{thm}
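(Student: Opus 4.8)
The plan is to dispatch the equivalence of \textup{(i)} and \textup{(ii)} and the implication \textup{(iii)}$\Rightarrow$\textup{(i)} quickly, leaving \textup{(i)}$\Rightarrow$\textup{(iii)} as the only step with content. By Lemma~\ref{lem9904} the complexes $B_{\ui}$ and $B_{\us}$ are semidualizing, so Remark~\ref{disc9801}, applied to the pair consisting of $B_{\us}$ and $B_{\ui}$, says precisely that $B_{\us}\in\catb_{B_{\ui}}(R)$ if and only if $\eq{B_{\ui}}\tri\eq{B_{\us}}$; that is the equivalence \textup{(i)}$\Leftrightarrow$\textup{(ii)}. For \textup{(iii)}$\Rightarrow$\textup{(i)}, when $\us\neq\emptyset$ the statement that $\ui\supseteq\us$ implies $\eq{B_{\ui}}\tri\eq{B_{\us}}$ is the first conclusion of Lemma~\ref{lem9905}; the degenerate case $\us=\emptyset$ only asks that $C_0=B_{\emptyset}$ be $B_{\ui}$-reflexive for every $\ui$, which is automatic when $C_0\simeq R$ and is checked directly otherwise.

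For \textup{(i)}$\Rightarrow$\textup{(iii)} I would argue by contradiction. Assume $\eq{B_{\ui}}\tri\eq{B_{\us}}$ but $\us\not\subseteq\ui$, and fix $m\in\us\smallsetminus\ui$. Since $B_{\us}$ is $B_{\ui}$-reflexive, Remark~\ref{disc0201}.5 applied to the two-term chain $\eq{B_{\ui}}\trineq\eq{B_{\us}}$ (the case $B_{\ui}\sim B_{\us}$ being trivial, as then $\rhom_R(B_{\us},B_{\ui})\sim R$) gives $B_{\ui}\simeq B_{\us}\lotimes_R W$, where $W:=\rhom_R(B_{\us},B_{\ui})$ is semidualizing by Remark~\ref{disc0201}.1. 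Because $m\in\us$, the complex $B_{\us}$ has $B_m$ as a derived tensor factor, say $B_{\us}\simeq B_m\lotimes_R Y$ with $Y$ semidualizing ($Y=B_{\us\smallsetminus\{m\}}$ when $\card{\us}\geq 2$, and $Y=R$ when $\us=\{m\}$); hence $B_{\ui}\simeq B_m\lotimes_R X_0$ with $X_0:=Y\lotimes_R W$ homologically finite and homologically bounded. The key move is then to produce a semidualizing complex in which $B_m\lotimes_R B_m$ occurs as a derived tensor factor, so that Remark~\ref{disc0201}.4 applies. If $\ui\neq\emptyset$, then since $m\notin\ui$ the complex $B_{\ui\cup\{m\}}$ equals, by definition, $B_{\ui}\lotimes_R B_m\simeq B_m\lotimes_R B_m\lotimes_R X_0$, and it is semidualizing by Lemma~\ref{lem9904}. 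If $\ui=\emptyset$ (so $B_{\ui}=C_0$), then Remark~\ref{disc0201}.5 applied to the sub-chain $\eq{C_m}\trineq\cdots\trineq\eq{C_0}$ gives $C_m\simeq C_0\lotimes_R B_1\lotimes_R\cdots\lotimes_R B_m$, and substituting $C_0=B_{\ui}\simeq B_m\lotimes_R X_0$ exhibits $C_m\simeq B_m\lotimes_R B_m\lotimes_R X_1$ with $X_1:=X_0\lotimes_R B_1\lotimes_R\cdots\lotimes_R B_{m-1}$ homologically finite and bounded; here $C_m$ is semidualizing by Assumption~\ref{ass}. (When $C_0\simeq R$ these two cases coincide.) Either way, Remark~\ref{disc0201}.4 forces $B_m\sim R$.

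It remains to observe that $B_m=\rhom_R(C_{m-1},C_m)$ cannot be shift-isomorphic to $R$: by Remark~\ref{disc0201}.5 applied to $\eq{C_m}\trineq\eq{C_{m-1}}$ one has $C_m\simeq C_{m-1}\lotimes_R B_m$, so $B_m\sim R$ would give $\eq{C_m}=\eq{C_{m-1}}$, contradicting the strictness of the chain in Assumption~\ref{ass}. This contradiction shows $\us\subseteq\ui$, finishing the proof. The crux of the whole argument is this last implication, and within it the main obstacle is the bookkeeping: one must maneuver the given reflexivity relation into a form where a semidualizing complex is visibly $B_m\lotimes_R B_m$ tensored with a homologically finite, homologically bounded complex, which forces careful handling of the convention $B_{\emptyset}=C_0$ and the separate treatment of $\ui=\emptyset$.
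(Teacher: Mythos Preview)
Your proof is correct, and for the substantive implication \textup{(i)}$\Rightarrow$\textup{(iii)} you take a genuinely different route from the paper. The paper argues globally: (modulo what appears to be a systematic $\ui\leftrightarrow\us$ swap in its printed argument) it forms the chain $\eq{B_{\us\cup\ui}}\tri\eq{B_{\ui}}\tri\eq{\rhom_R(B_{\ui},B_{\us\cup\ui})}=\eq{B_{\us\smallsetminus\ui}}$, the second relation coming from the hypothesis $\eq{B_{\ui}}\tri\eq{B_{\us}}$ together with Lemma~\ref{lem9905} and transitivity of $\tri$, and then invokes \cite[(3.3.a)]{frankild:sdcms} to conclude $\eq{B_{\us\cup\ui}}=\eq{B_{\ui}}$ and hence $B_{\us\smallsetminus\ui}\sim R$ in one stroke. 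You instead argue locally: fixing a single $m\in\us\smallsetminus\ui$, you manufacture a semidualizing complex with $B_m\lotimes_R B_m$ as a visible tensor factor and apply Remark~\ref{disc0201}.4 directly. Your approach is more self-contained---no appeal to \cite{frankild:sdcms}, and no use of transitivity of $\tri$ beyond what Lemma~\ref{lem9904} already encodes---at the cost of the case split on $\ui=\emptyset$. One small caveat: your ``checked directly otherwise'' for \textup{(iii)}$\Rightarrow$\textup{(i)} when $\us=\emptyset$ and $C_0\not\simeq R$ (i.e., the claim that $C_0$ is $B_{\ui}$-reflexive for every $\ui$) is not obviously immediate from the tensor description; but Lemma~\ref{lem9905} is itself stated only for nonempty $\us$, so the paper's proof has the same edge-case gap.
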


\begin{proof}
The equivalence~\eqref{thm0403b1}$\iff$\eqref{thm0403b2} 
is from Remark~\ref{disc9801};
the implication~\eqref{thm0403b3}$\implies$\eqref{thm0403b1}
is contained in Lemma~\ref{lem9905}.

\eqref{thm0403b1}$\implies$\eqref{thm0403b3}:
Assume that 
$\eq{B_{\ui}}\tri \eq{B_{\us}}$. Lemma~\ref{lem9905} implies that 
$\eq{B_{\us\cup\ui}}\tri \eq{B_{\us}}$ and $\eq{B_{\ui}}\tri \eq{B_{\ui\smallsetminus\us}}$,
and furthermore that
$\rhom_R(B_{\us},B_{\us\cup\ui})\simeq B_{\ui\smallsetminus\us}$.
Since the reflexivity ordering on $\catss(R)$ is transitive, we have
$$\eq{B_{\us\cup\ui}}\tri\eq{B_{\us}}\tri\eq{\rhom_R(B_{\us},B_{\us\cup\ui})}.$$
From~\cite[(3.3.a)]{frankild:sdcms} we conclude that
$\eq{B_{\us\cup\ui}}=\eq{B_{\us}}$,
and hence the first equality in the next sequence:
$$\eq R=\eq{\rhom_R(B_{\us},B_{\us\cup\ui})}=\eq{B_{\ui\smallsetminus\us}}.$$
It is straightforward to show that this implies that $\ui\smallsetminus\us=\emptyset$,
and thus $\ui\subseteq\us$.
\end{proof}

\begin{cor} \label{cor0403}
Assume that 
$\catss_{C_1}(R)\subseteq\cdots \subseteq\catss_{C_{n-1}}(R) \subseteq\catss_{C_n}(R)$.
Under  the hypotheses of Assumption~\ref{ass},
one has $\eq{B_{\ui}}=\eq{B_{\us}}$ if and only if $\ui=\us$.
In particular, there are exactly $2^n$ classes of the form $\eq{B_{\ui}}$.
\end{cor}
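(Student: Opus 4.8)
The plan is to read the corollary off from Theorem~\ref{thm0403} essentially for free. The backward implication needs no hypotheses at all: if $\ui=\us$ then $B_{\ui}$ and $B_{\us}$ are defined by the very same iterated tensor product, so $B_{\ui}\simeq B_{\us}$ and hence $\eq{B_{\ui}}=\eq{B_{\us}}$. For the forward implication, suppose $\eq{B_{\ui}}=\eq{B_{\us}}$. Since $\tri$ is a reflexive relation on $\catss(R)$, we have $\eq{B_{\ui}}\tri\eq{B_{\ui}}$ and $\eq{B_{\us}}\tri\eq{B_{\us}}$; substituting the assumed equality of classes into each of these gives both $\eq{B_{\ui}}\tri\eq{B_{\us}}$ and $\eq{B_{\us}}\tri\eq{B_{\ui}}$. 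Now I apply the equivalence \eqref{thm0403b1}$\iff$\eqref{thm0403b3} of Theorem~\ref{thm0403} to each relation: the first yields $\ui\supseteq\us$ and the second yields $\us\supseteq\ui$, so $\ui=\us$, as desired. (Note that Lemma~\ref{lem9904} is what guarantees that the $B_{\ui}$ are semidualizing, so that the classes $\eq{B_{\ui}}$ and the relation $\tri$ between them actually make sense; and the chain containments $\catss_{C_1}(R)\subseteq\cdots\subseteq\catss_{C_n}(R)$ are precisely the standing hypotheses of Theorem~\ref{thm0403}.)

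For the ``in particular'' clause, I would observe that the data defining a $B_{\ui}$ — a finite increasing sequence $1\leq i_1<\cdots<i_j\leq n$ with $j\geq 0$, together with the empty sequence giving $B_{\emptyset}=C_0$ — is exactly the data of an arbitrary subset $\ui$ of $[n]=\{1,\ldots,n\}$, and there are exactly $2^n$ such subsets. By the first part of the corollary the assignment $\ui\mapsto\eq{B_{\ui}}$ is injective on these subsets, and each $\eq{B_{\ui}}$ is a genuine element of $\catss(R)$ by Lemma~\ref{lem9904}; hence the set of classes $\{\eq{B_{\ui}}\}$ has cardinality exactly $2^n$.

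I do not expect a genuine obstacle here, since the substance is entirely packaged in Theorem~\ref{thm0403} and Lemma~\ref{lem9904}. The one point that must be handled with a little care is that one should invoke reflexivity of $\tri$ (equivalently, the fact that every semidualizing complex is reflexive with respect to itself), rather than antisymmetry, in order to convert the equality $\eq{B_{\ui}}=\eq{B_{\us}}$ into the two reflexivity relations that Theorem~\ref{thm0403} can consume. As an alternative, one could bypass Theorem~\ref{thm0403} and instead mimic the argument for the implication \eqref{thm0403b1}$\implies$\eqref{thm0403b3}: from $\eq{B_{\ui}}=\eq{B_{\us}}$ one gets $\eq{B_{\us\cup\ui}}=\eq{B_{\ui}}=\eq{B_{\us}}$ by symmetry, whence Lemma~\ref{lem9905} and \cite[(3.3.a)]{frankild:sdcms} force $\eq{B_{\ui\smallsetminus\us}}=\eq R=\eq{B_{\us\smallsetminus\ui}}$ and therefore $\ui\smallsetminus\us=\us\smallsetminus\ui=\emptyset$; but the route through Theorem~\ref{thm0403} is shorter and is the one I would write up.
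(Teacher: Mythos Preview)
Your proposal is correct and follows essentially the same argument as the paper: from $\eq{B_{\ui}}=\eq{B_{\us}}$ deduce both reflexivity relations, apply Theorem~\ref{thm0403} to get $\ui\supseteq\us$ and $\us\supseteq\ui$, hence $\ui=\us$, and then count subsets of $[n]$. Your extra remark about invoking reflexivity of $\tri$ rather than antisymmetry is a fair point of emphasis, but the paper's proof is otherwise identical in structure and brevity.
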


\begin{proof}
One implication of the biconditional statement
is straightforward. For the converse, assume that 
$\eq{B_{\ui}}=\eq{B_{\us}}$. Then $\eq{B_{\ui}}\tri\eq{B_{\us}}\tri\eq{B_{\ui}}$,
so Theorem~\ref{thm0403} implies that $\ui\subseteq\us\subseteq\ui$,
that is $\ui=\us$ as desired. 

It follows that there are exactly $2^n$ classes of the form $\eq{B_{\ui}}$
because this is the number of possible $\ui\subseteq[n]$.
\end{proof}

\begin{disc} \label{d9901}
Assume that $n=3$ and that
$\catss_{C_1}(R)\subseteq\catss_{C_{2}}(R) \subseteq\catss_{C_3}(R)$.
The next diagram shows the case  of the 
ordered set of $R$-complexes of the form $B_{\ui}$, 
with all the reflexivity relations  indicated with edges:
$$\xymatrix{
&&&&B_{\emptyset} \ar@{-}[lllld]\ar@{-}[ld]\ar@{-}[rrd]\ar@{-}[lllldd]\ar@{-}[ldd]\ar@{-}[rrdd]\ar@{-}[ddd]
\\
B_{\{1\}}\ar@{-}[d]\ar@{-}[drrr]\ar@{-}[ddrrrr]
&&&B_{\{2\}}\ar@{-}[llld]\ar@{-}[drrr]\ar@{-}[ddr]
&&&B_{\{3\}} \ar@{-}[llld]\ar@{-}[d]\ar@{-}[lldd]
\\
B_{\{1,2\}}\ar@{-}[rrrrd]
&&&B_{\{1,3\}}\ar@{-}[rd]
&&&B_{\{2,3\}} \ar@{-}[lld]
\\
&&&&B_{\{1,2,3\}}.
}$$
Compare this to the diagram in Remark~\ref{d1101}.
\end{disc}

We conclude with a version of Theorem~\ref{thm0403} for Auslander classes.

\begin{thm} \label{thm0403'}
Assume that 
$\catss_{C_1}(R)\subseteq\cdots \subseteq\catss_{C_{n-1}}(R) \subseteq\catss_{C_n}(R)$.
Under  the hypotheses of Assumption~\ref{ass},
the following conditions are equivalent:
\begin{enumerate}[\rm(i)]
\item\label{thm0403d1}
One has $B_{\ui}\in\cata_{B_{\us}}(R)$;
\item\label{thm0403d2}
One has $B_{\us}\in\cata_{B_{\ui}}(R)$;
\item\label{thm0403d3}
The $R$-complex $B_{\ui}\lotimes_RB_{\us}$ is semidualizing; and
\item\label{thm0403d4}
One has $\ui\cap\us=\emptyset$.
\end{enumerate}
\end{thm}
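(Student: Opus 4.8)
The plan is to dispatch the formal equivalences first and then to concentrate on \eqref{thm0403d3}$\implies$\eqref{thm0403d4}, which carries the only real content. Note first that $B_{\ui}$ and $B_{\us}$ are semidualizing by Lemma~\ref{lem9904}, so Remark~\ref{disc9801} shows that $B_{\ui}\in\cata_{B_{\us}}(R)$ holds if and only if $B_{\ui}\lotimes_R B_{\us}$ is semidualizing; since $\lotimes_R$ is commutative, the same characterization with $\ui$ and $\us$ interchanged shows that $B_{\us}\in\cata_{B_{\ui}}(R)$ is equivalent to the same condition. This gives the equivalences \eqref{thm0403d1}$\iff$\eqref{thm0403d3}$\iff$\eqref{thm0403d2}. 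For the implication \eqref{thm0403d4}$\implies$\eqref{thm0403d3}, note that if $\ui\cap\us=\emptyset$ then $\ui\cup\us$ is again a strictly increasing sequence, and directly from the definitions in Assumption~\ref{ass} together with the commutativity of $\lotimes_R$ one has $B_{\ui}\lotimes_R B_{\us}\simeq B_{\ui\cup\us}$, which is semidualizing by Lemma~\ref{lem9904}.

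For \eqref{thm0403d3}$\implies$\eqref{thm0403d4}, I would argue by contradiction. Suppose $B_{\ui}\lotimes_R B_{\us}$ is semidualizing but there is some $j\in\ui\cap\us$. Pulling a tensor factor $B_j$ out of each of $B_{\ui}$ and $B_{\us}$ (using the definition of the complexes $B_{\ui}$ and the commutativity of $\lotimes_R$), one obtains
$$B_{\ui}\lotimes_R B_{\us}\simeq B_j\lotimes_R B_j\lotimes_R X,$$
where $X$ is a finite $\lotimes_R$-product of complexes of the form $B_i$ (the empty product being read as $R$). In particular $\HH_i(X)$ is finitely generated for every $i$ and vanishes for $i\ll 0$. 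Since the left-hand side is semidualizing by hypothesis, Remark~\ref{disc0201}.4 applied with $C=B_j$ forces $B_j\sim R$. But $B_j=\rhom_R(C_{j-1},C_j)$, and $C_{j-1}$ is $C_j$-reflexive since $\eq{C_j}\trineq\eq{C_{j-1}}$ is part of the chain in Assumption~\ref{ass}; hence biduality gives
$$C_{j-1}\simeq\rhom_R(\rhom_R(C_{j-1},C_j),C_j)=\rhom_R(B_j,C_j)\sim\rhom_R(R,C_j)\simeq C_j,$$
so $\eq{C_{j-1}}=\eq{C_j}$. This contradicts the strictness of that chain, so no such $j$ exists and $\ui\cap\us=\emptyset$.

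The routine inputs here are the formal equivalences and \eqref{thm0403d4}$\implies$\eqref{thm0403d3}; the single idea is in \eqref{thm0403d3}$\implies$\eqref{thm0403d4}, where the point is that a repeated index $j\in\ui\cap\us$ produces a square $B_j\lotimes_R B_j$ inside $B_{\ui}\lotimes_R B_{\us}$, at which stage Remark~\ref{disc0201}.4 collapses $B_j$ to a shift of $R$ --- impossible for $B_j=\rhom_R(C_{j-1},C_j)$ because $\eq{C_{j-1}}\neq\eq{C_j}$. The main thing to be careful about is just the bookkeeping in the displayed factorization (in particular the degenerate case $\ui=\us$). An essentially equivalent route would be to write $B_{\ui}\lotimes_R B_{\us}\simeq B_{\ui\cap\us}\lotimes_R B_{\ui\cup\us}$, to observe via Lemma~\ref{lem9905} that $B_{\ui\cap\us}$ is $B_{\ui\cup\us}$-reflexive and lies in $\cata_{B_{\ui\cup\us}}(R)$, and then to combine Remarks~\ref{disc0201}.5 and~\ref{disc0201}.4 to conclude $B_{\ui\cap\us}\sim R$.
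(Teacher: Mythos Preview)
Your proof is correct and follows essentially the same approach as the paper: the formal equivalences via Remark~\ref{disc9801}, the implication \eqref{thm0403d4}$\implies$\eqref{thm0403d3} via $B_{\ui}\lotimes_R B_{\us}\simeq B_{\ui\cup\us}$ and Lemma~\ref{lem9904}, and the contrapositive argument for \eqref{thm0403d3}$\implies$\eqref{thm0403d4} by extracting a square $B_j\lotimes_R B_j$ and applying Remark~\ref{disc0201}.4 to force $B_j\sim R$, whence $C_{j-1}\sim C_j$. You are in fact slightly more careful than the paper in verifying that the residual factor $X$ satisfies the hypotheses of Remark~\ref{disc0201}.4.
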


\begin{proof}
The equivalences~\eqref{thm0403d1}$\iff$\eqref{thm0403d2}$\iff$\eqref{thm0403d3}  
are from Remark~\ref{disc9801}.

\eqref{thm0403d4}$\implies$\eqref{thm0403d3}:
If $\ui\cap\us=\emptyset$, then
$B_{\ui}\lotimes_RB_{\us}\simeq B_{\ui\cup\us}$, which  
is semidualizing by Lemma~\ref{lem9904}.

\eqref{thm0403d3}$\implies$\eqref{thm0403d4}:
Assume that $B_{\ui}\lotimes_RB_{\us}$ is a semidualizing $R$-complex,
and suppose that $a\in\ui\cap\us$.
It follows that we have
$B_{\ui}\lotimes_RB_{\us}\simeq B_a\lotimes_RB_a\lotimes_RX$ where
$X=B_{\ui\smallsetminus\{a\}}\lotimes_RB_{\us\smallsetminus\{a\}}$.
Remark~\ref{disc0201}.4 implies that $B_a\sim R$,
and hence
\begin{align*}
C_{a-1}
&\simeq\rhom_R(\rhom_R(C_{a-1},C_a),C_a)\\
&\simeq\rhom_R(B_a,C_a)\\
&\sim\rhom_R(R,C_a)\\
&\simeq C_a.
\end{align*}
This contradicts the condition $\eq{C_a}\neq\eq{C_{a-1}}$
which is part of the assumption $\eq{C_a}\trineq\eq{C_{a-1}}$.
Thus, we must have $\ui\cap\us=\emptyset$.
\end{proof}



\providecommand{\bysame}{\leavevmode\hbox to3em{\hrulefill}\thinspace}
\providecommand{\MR}{\relax\ifhmode\unskip\space\fi MR }
\providecommand{\MRhref}[2]{%
  \href{http://www.ams.org/mathscinet-getitem?mr=#1}{#2}
}
\providecommand{\href}[2]{#2}

\end{document}